\definecolor{darkred}{RGB}{180,0,0}
\definecolor{darkblue}{RGB}{0,0,180}
\newtheorem{thm}{Theorem}[section]
\newtheorem{fact}[thm]{Fact}
\newtheorem*{fact*}{Fact}
\newtheorem*{thm*}{Theorem}
\newtheorem{lem}[thm]{Lemma}
\newtheorem{rem}[thm]{Remark}
\newtheorem{prop}[thm]{Proposition}
\newtheorem{cor}[thm]{Corollary}
\newtheorem{ex}[thm]{Example}
\newtheorem{question}[thm]{Question}
\theoremstyle{definition}
\newtheorem{defi}[thm]{Definition}
\newtheorem{conv}[thm]{Convention}
\newtheorem{longEx}[thm]{Example}
\newtheorem{longrem}[thm]{Remark}
\def\R{{\mathbb R}}
\def\N{{\mathbb N}}
\def\Z{{\mathbb Z}}
\def\F{{\mathbb F}}
\def\T{{\mathbb T}}
\def\H{{\mathcal H}}
\def\cO{{\mathcal O}}
\def\eps{\varepsilon}
\def\onto{\twoheadrightarrow}
\def\st{\ |\ }
\def\Bigst{\ \Big|\ }
\DeclareMathOperator{\supp}{supp}
\DeclareMathOperator{\diam}{diam}
\newcommand{\map}[3]{{#1 \colon #2 \to #3}}
\newcommand{\ceil}[1]{{\left\lceil #1 \right\rceil}}
\newcommand{\floor}[1]{{\left\lfloor #1 \right\rfloor}}
\newcommand*{\defeq}{\mathrel{\vcenter{\baselineskip0.5ex \lineskiplimit0pt
                     \hbox{\scriptsize.}\hbox{\scriptsize.}}}%
                     =}
\newcommand*{\eqdef}{=\mathrel{\vcenter{\baselineskip0.5ex \lineskiplimit0pt
                     \hbox{\scriptsize.}\hbox{\scriptsize.}}}}
\title{Warped cones over profinite completions}
\author{Damian Sawicki}
\address{Institute of Mathematics of the Polish Academy of Sciences \\
 Śniadeckich 8, 02-097 Warszawa, Poland}
\subjclass[2010]{51F99 (Primary), 20F69, 43A07, 43A35 (Secondary)}
\keywords{Warped cone; profinite completion; property A; coarse embedding; amenable group; Haagerup property}
\begin{document}

\maketitle

\begin{abstract}
We construct metric spaces that do not have property A yet are coarsely embeddable into the Hilbert space. Our examples are so called warped cones, which were introduced by J. Roe to serve as examples of spaces non-embeddable into a Hilbert space and with or without property A. The construction provides the first examples of warped cones combining coarse embeddability and lack of property A.

We also construct warped cones over manifolds with isometrically embedded expanders and generalise Roe's criteria for the lack of property A or coarse embeddability of a warped cone. Along the way, it is proven that property A of the warped cone over a profinite completion is equivalent to amenability of the group.

In the appendix we solve a problem of Nowak regarding his examples of spaces with similar properties.
\end{abstract}


\section*{Introduction}
\subsection*{Warped cones}
Let $Y$ be a compact subset of the Euclidean sphere $S^{n-1}\subseteq \R^n$ admitting an action of a finitely generated group $\Gamma$. The warped cone of $Y$ consists of the set $\cO Y = \{ty \st t\in(0,\infty),\ y\in Y \} \subseteq \R^n$ with the Euclidean metric properly ``warped'' by the group action.

Warped cones were introduced by John Roe in \cites{foliatedCones,cones}, where they serve as a rich source of examples of spaces with and without property~A of Guoliang Yu \cite{Yu}. He also describes a class of warped cones that do not coarsely embed into a Hilbert space. Warped cones were also studied in \cites{Kim, Wulff}.

While giving sufficient conditions for the property A, its negation and the negation of coarse embeddability, Roe leaves the problem of finding sufficient conditions for the coarse embeddability open, which was a motivation for this work.

The group action on a cone becomes an action by translations on the warped cone, so it yields finite propagation operators in the uniform Roe algebra. Recently \cite{DN}, Dru\c{t}u and Nowak constructed from them so called non-compact ghost projections. Such projections are related to the coarse Baum--Connes conjecture and it is suspected \cites{cones,DN} that some warped cones are counterexamples to the conjecture.

\subsection*{Property A and coarse embeddability}

Property~A was designed by Yu to imply coarse embeddability into a Hilbert space and whether the converse is true has been a question absorbing the researchers for the last decade. For finitely generated groups it was shown in \cite{compression} that existence of a sufficiently good embedding (with \emph{compression} larger than a half) implies property~A. 

The first examples of spaces without property~A but embeddable into a Hilbert space were provided by Nowak in \cite{Nowak}. The first family of examples with bounded geometry was found by Arzhantseva, Guentner, and \v{S}pakula \cite{AGS}, and their construction was vastly generalised by Khukhro \cite{Khukhro}. Additional examples are provided thanks to the permanence results of \cites{Khukhro12, CDK} due to Cave, Dreesen, and Khukhro. All these examples are coarse disjoint sums of finite Cayley graphs. Recently, the problem was finally solved for infinite groups rather than sequences of finite metric spaces, when Osajda \cite{Osajda}, building on the preceding work \cite{AO} with Arzhantseva, constructed examples being a finitely generated group.

\subsection*{Results}

We construct the first warped cones for which coarse embeddability is obtained without property A (Theorem~\ref{T:summary}), extending the abovementioned type of results to the realm of warped cones. Moreover, we show that for the considered class of warped cones property A is equivalent to amenability of the group (\ref{T:A=Amen}). Our construction -- given an appropriate sequence of subgroups of a non-amenable group -- provides an example of a warped cone that is embeddable into a Hilbert space yet does not satisfy property~A. Families of
such sequences are the main results of \cites{AGS, Khukhro12, Khukhro}.

We also generalise (\ref{P:AtoAmenability}) two results of \cite{cones}, which assert that coarse embeddability (respectively: property A) of the cone implies the Haagerup property (respectively: amenability) of the group $\Gamma$.

Apart from that, we describe sufficient conditions (\ref{P:embeddingExpanders}) for the existence of isometrically embedded expanders in the warped cone, illustrating it by examples coming from the action of linear groups on tori.

Along the way to the main theorems, we develop the theory of warped cones. We provide (easier to check) conditions (\ref{L:coneEmbeddability}, \ref{P:propA}) equivalent to coarse embeddability and property A for warped cones. Furthermore, we propose a formula (\ref{P:quasimetric}) that significantly simplifies calculation of the warped metric.

Finally, in the appendix, we solve a problem posed by Nowak \cite{Nowak} about his examples of coarsely embeddable spaces without property A. This enlarges the family of spaces to which his theorem can be applied.


\section{Warped metric and cones}\label{S:WarpedMetricAndCones}
The following definition and fact from \cite{cones} are essential.

\begin{defi} Let $(X,d)$ be a metric space and $\Gamma$ be a group acting by homeomorphisms on $X$, provided with a finite generating set $S$. The warped metric $d_\Gamma$ on $X$ is the greatest metric satisfying the following inequalities:
\begin{eqnarray*} d_\Gamma(x,x')\leq d(x,x'), & & d_\Gamma(x,sx)\leq 1 \ \forall s\in S.
\end{eqnarray*}
\end{defi}

\begin{fact}\label{F:metric} Let $x,x'\in X$. For $\gamma\in\Gamma$ let $|\gamma|$ denote the word length of $\gamma$ relative to the generating set $S$. The warped distance $d_\Gamma(x,x')$ is the infimum of sums
$$\sum |\gamma_i| +  d(\gamma_i x_i, x_{i+1})$$
taken over all finite sequences $x=x_0, x_1, \ldots, x_N=x'$ in $X$ and $\gamma_0,\ldots, \gamma_{N-1}$ in $\Gamma$. Moreover, if $d_\Gamma(x,x')\leq k$, then it is enough to consider sequences of length at most $N=k$ and if $X$ is proper, then there is a sequence such that the infimum is attained.
\end{fact}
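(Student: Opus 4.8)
The plan is to show that the claimed infimum, call it $\rho(x,x')$, equals $d_\Gamma(x,x')$, and then to read off the two refinements about $N$ and attainment. First I would check that $\rho$ is a pseudometric satisfying the two defining inequalities. The triangle inequality follows by concatenating two finite sequences, and $\rho(x,x)=0$ from the one-term sequence. Symmetry needs a small observation, since $\Gamma$ acts only by homeomorphisms: reversing a sequence turns the $i$-th segment ``apply $\gamma_i$ at $x_i$, then $d$-jump to $x_{i+1}$'' of cost $|\gamma_i|+d(\gamma_i x_i,x_{i+1})$ into ``$d$-jump from $x_{i+1}$ to $\gamma_i x_i$, then apply $\gamma_i^{-1}$'', and regrouping this into the standard form (a group element followed by a $d$-jump) produces a sequence from $x'$ to $x$ of the same cost, using $|\gamma_i^{-1}|=|\gamma_i|$ and symmetry of $d$. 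Finally a single $d$-jump gives $\rho(x,x')\le d(x,x')$ and a single generator gives $\rho(x,sx)\le 1$.

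By maximality of $d_\Gamma$ this already yields $\rho\le d_\Gamma$. For the reverse inequality, let $\sigma$ be any pseudometric with $\sigma\le d$ pointwise and $\sigma(z,sz)\le 1$ for all $s\in S$ and all $z\in X$; expanding $\gamma_i$ as a geodesic word $s_1\cdots s_m$ (so $m=|\gamma_i|$) and telescoping, and using $\sigma(z,s^{-1}z)=\sigma(s^{-1}z,z)\le 1$, one gets $\sigma(y,\gamma_i y)\le|\gamma_i|$ for every $y$, hence for every sequence as in the statement
\[
\sigma(x,x')\le\sum_i\sigma(x_i,x_{i+1})\le\sum_i\bigl(\sigma(x_i,\gamma_i x_i)+\sigma(\gamma_i x_i,x_{i+1})\bigr)\le\sum_i\bigl(|\gamma_i|+d(\gamma_i x_i,x_{i+1})\bigr).
\]
Taking the infimum gives $\sigma\le\rho$; applying this to $\sigma=d_\Gamma$ gives $d_\Gamma\le\rho$, so $d_\Gamma=\rho$.

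For the refinements, I would normalise a sequence realising (nearly) the infimum: delete any zero-cost term, and whenever $\gamma_i=e$ for some $i\ge 1$ replace the two $d$-jumps meeting at $x_i$ by the single jump $\gamma_{i-1}x_{i-1}\to x_{i+1}$, which by the triangle inequality for $d$ does not increase the cost (and symmetrically for a trailing $\gamma_{N-1}=e$). After this every $\gamma_i$ with $i\ge 1$ is non-trivial, so $|\gamma_i|\ge 1$ and the total cost is at least $N-1$ (a non-trivial $\gamma_0$ adds a further $1$), which controls $N$ in terms of $d_\Gamma(x,x')$ and yields the asserted bound $N\le k$. When $X$ is proper, fix such a normalised near-minimiser of the bounded length $N$: the bound $|\gamma_i|\le k$ confines each $\gamma_i$ to the finite ball $B_\Gamma(e,k)$, whence $x_1$ lies in the compact set $\bigcup_{|\gamma_0|\le k}\overline B(\gamma_0 x,k)$, then $x_2$ in an analogous compact set built from the compact range of $x_1$ via the homeomorphisms $\gamma_1$, and so on down the sequence; so the near-minimisers vary over a compact parameter space on which the cost is continuous, and a genuine minimiser exists.

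The real work is the last step, and more broadly the bookkeeping forced by $\Gamma$ acting by homeomorphisms rather than isometries: this is what blocks a one-line symmetry argument, requires the length reduction to be carried out on the $d$-jumps (a group move cannot simply be slid past a jump), and makes the properness argument proceed layer by layer through continuity of the action instead of via a single ball estimate. One minor preliminary, dispatched in a sentence, is that the supremum of all pseudometrics satisfying the two inequalities is again such a pseudometric — so $d_\Gamma$ is well defined (and a genuine metric under the standing hypotheses on the action).
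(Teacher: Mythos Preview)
The paper does not prove this statement at all: it is quoted as a fact from Roe's paper \cite{cones}, so there is no ``paper's own proof'' to compare against. Your argument is the standard one and is essentially correct --- showing that the chain formula $\rho$ is a metric satisfying the two defining inequalities, then using maximality in both directions, is exactly how one establishes such a largest-metric description.

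One small gap worth flagging: your normalisation (merge a step with $\gamma_i=e$ into the preceding step) forces $\gamma_i\neq e$ only for $i\ge 1$, so the cost bound you actually obtain is $\text{cost}\ge N-1$, not $\text{cost}\ge N$. This yields $N\le k+1$ rather than $N\le k$. You cannot simply merge a trivial $\gamma_0$ forward into step~$1$, because $\gamma_1$ acts on $x_1$ and the action is only by homeomorphisms, so the $d$-jump cannot be transported through $\gamma_1$ without changing its length. Whether the sharp bound $N\le k$ holds as stated (or is an artefact of Roe's conventions) is a minor bookkeeping point with no effect on the applications in this paper; but as written, your sentence ``yields the asserted bound $N\le k$'' overstates what your normalisation delivers by one.

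The properness argument is fine: once $N$ and all $|\gamma_i|$ are bounded, the inductive confinement of each $x_j$ to a compact set via continuity of the $\gamma_i$ is the correct way to proceed when the action is not isometric.
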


One can also observe that it is enough to consider $\gamma_i$ of length at most~$1$.

Note that the construction relies on the generating set of $\Gamma$ and the metric on $X$, but the coarse structure of the warped space remains the same if we choose another finite generating set or a different but coarsely equivalent metric on $X$.

We can now introduce the cones.

\begin{defi}\label{D:warpedCone} Let $(Y,d_Y)$ be a compact space of diameter at most $2$, admitting a continuous action of a finitely generated group $\Gamma$. The cone of $Y$ is the metric space $\cO Y = ([1,\infty)\times Y,\ d)$, where metric $d$ is defined as follows:
$$d((t,y),(t',y')) = |t-t'|+\min(t,t')\cdot d_Y(y,y').$$
The warped cone of $Y$, denoted by $\cO_\Gamma Y$, is the cone of $Y$ with the warped metric, where the warping group action is defined by the formula: $\gamma(t,y) = (t,\gamma y)$.
\end{defi}

We ignore the tip of the cone for the sake of notational simplicity, but, clearly, considerations for the whole cone $[0,\infty)\times Y / \{0\}\times Y$ rather than $[1,\infty)\times Y$ would be the same.

One easily verifies that indeed the above formula gives a metric (we prove it in a more general case in Lemma \ref{L:metricWarpedCone}) and if $Y$ is a subset of the unit sphere in a Banach space $V$ (primarily, $Y
\subseteq S^n\subset \R^{n+1}$), then the map $\cO Y\owns (t,y)\mapsto ty\in V$ establishes a bi-Lipschitz homeomorphism onto the image. This image is a geometric model of our space.

We would like to mention the following results from \cite{cones}.

\begin{fact} If $(X,d)$ is proper, then the warped space $(X,d_\Gamma)$ is also proper. In particular, it happens for warped cones.
\end{fact}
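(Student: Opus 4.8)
The plan is to show directly that for every $x_0\in X$ and every $r>0$ the closed ball $\bar B_{d_\Gamma}(x_0,r)$ is compact, by exhibiting it as a closed subset of a genuinely $d$-compact set and then transferring compactness to the (coarser) $d_\Gamma$-topology. Set $k=\ceil{r}$ and $R=k+1$. If $d_\Gamma(x_0,x')\le r$, then in particular $d_\Gamma(x_0,x')\le k$, so by Fact~\ref{F:metric} there are a chain $x_0,x_1,\ldots,x_N=x'$ with $N\le k$ and elements $\gamma_0,\ldots,\gamma_{N-1}\in\Gamma$ with $\sum_i |\gamma_i|+d(\gamma_i x_i,x_{i+1})<R$. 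Consequently each $|\gamma_i|<R$, so every $\gamma_i$ lies in the finite set $F\defeq\{\gamma\in\Gamma:|\gamma|\le R\}$ (here finiteness uses that $S$ is finite), and each jump satisfies $d(\gamma_i x_i,x_{i+1})<R$.

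Next I would build a compact set containing the ball by iterating the operation ``apply $F$, then take a closed $d$-neighbourhood''. Put $K_0=\{x_0\}$ and, inductively, $K_{j+1}=\overline{B_d\bigl(F\cdot K_j,\,R\bigr)}$, the closed $R$-neighbourhood in $(X,d)$ of $F\cdot K_j=\bigcup_{\gamma\in F}\gamma K_j$. Since each $\gamma$ acts by a homeomorphism and $F$ is finite, $F\cdot K_j$ is compact whenever $K_j$ is, and the closed $R$-neighbourhood of a compact set in a proper metric space is closed and bounded, hence compact; so all $K_j$ are compact. A second induction along the chain above gives $x_i\in K_i$ for every $i$: $x_0\in K_0$, and if $x_i\in K_i$ then $\gamma_i x_i\in F\cdot K_i$ and $d(\gamma_i x_i,x_{i+1})<R$, so $x_{i+1}\in K_{i+1}$. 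As $N\le k$, we conclude $x'=x_N\in K\defeq\bigcup_{j=0}^{k}K_j$, a finite union of compact sets and hence compact; thus $\bar B_{d_\Gamma}(x_0,r)\subseteq K$.

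It remains to transfer compactness. The identity map $(X,d)\to(X,d_\Gamma)$ is continuous because $d_\Gamma\le d$, so $K$ is also compact in the $d_\Gamma$-topology; and $\bar B_{d_\Gamma}(x_0,r)$ is $d_\Gamma$-closed, hence a closed subset of the $d_\Gamma$-compact set $K$, hence itself $d_\Gamma$-compact. Since $x_0$ and $r$ were arbitrary, $(X,d_\Gamma)$ is proper. The ``in particular'' then follows once one checks that the unwarped cone is proper, which is immediate: a closed $d$-ball of radius $r$ around $(t_0,y_0)$ is contained in $[\max(1,t_0-r),\,t_0+r]\times Y$, a compact set.

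The only genuine subtlety I anticipate is the bookkeeping about the two topologies --- being careful that ``proper'' is meant with respect to $d_\Gamma$ and justifying the passage from $d$-compactness to $d_\Gamma$-compactness --- together with the (routine) facts that a finite-radius word ball in a finitely generated group is finite and that the $\gamma$-translate of a compact set is compact. No metric estimate here is delicate; all of the content is packed into the bound $N\le k$ of Fact~\ref{F:metric}, which is exactly what keeps the union $\bigcup_{j\le k}K_j$ finite.
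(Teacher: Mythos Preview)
Your argument is correct. The paper itself does not supply a proof of this fact; it is quoted verbatim from Roe's paper \cite{cones} and stated without argument, so there is no in-paper proof to compare against. Your approach---bounding a $d_\Gamma$-ball by a finite iteration of ``translate by $F$, then thicken by $R$ in the $d$-metric'' and then passing compactness through the $1$-Lipschitz identity $(X,d)\to(X,d_\Gamma)$---is the standard one, and essentially how Roe argues in the cited source. The only cosmetic point: you do not actually need the infimum in Fact~\ref{F:metric} to be attained, since choosing any chain with total cost strictly below $R=k+1$ already suffices, and you implicitly use this by writing a strict inequality; so the properness of $(X,d)$ enters only through ``closed and bounded $\Rightarrow$ compact'', not through the attainment clause of Fact~\ref{F:metric}.
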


\begin{fact} If $(X,d)$ has bounded geometry and the action of $\Gamma$ on $X$ is Lipschitz, then the warped space $(X,d_\Gamma)$ also has bounded geometry. In particular, if $\Gamma$ acts Lipschitzly on $Y$ and $Y$ embeds bi-Lipschitz in $\R^n$, then the warped cone $\cO_\Gamma Y$ has bounded geometry.
\end{fact}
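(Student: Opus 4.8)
The plan is to count $\eps$-separated points inside a warped ball by transporting the problem, through Fact~\ref{F:metric}, into the unwarped metric, where bounded geometry is available by assumption. Recall that $(X,d_\Gamma)$ has bounded geometry as soon as there is some $\eps>0$ and a function $N_\eps\colon(0,\infty)\to\N$ such that every $\eps$-separated subset of every $d_\Gamma$-ball of radius $R$ has at most $N_\eps(R)$ elements. Fix, once and for all, such an $\eps>0$ together with a function $N_\eps$ witnessing bounded geometry of $(X,d)$; the same $\eps$ will work for $d_\Gamma$. Fix also $L\ge 1$, a Lipschitz constant that works simultaneously for all the homeomorphisms $x\mapsto sx$ with $s\in S\cup S^{-1}$: such an $L$ exists because $S$ is finite and the action is Lipschitz, and $e$ acts isometrically.

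Now fix $x\in X$ and $R>0$ and set $k\defeq\lceil R\rceil+1$. If $d_\Gamma(x,x')\le R$, then Fact~\ref{F:metric} provides $N\le\lceil R\rceil$, elements $\gamma_0,\dots,\gamma_{N-1}\in\Gamma$, and points $x=x_0,\dots,x_N=x'$ with $\sum_{i}\bigl(|\gamma_i|+d(\gamma_i x_i,x_{i+1})\bigr)\le R+1$; in particular $\sum_i|\gamma_i|\le R+1$ and, writing $r_i\defeq d(\gamma_i x_i,x_{i+1})$, also $\sum_i r_i\le R+1$. Put $w\defeq\gamma_{N-1}\gamma_{N-2}\cdots\gamma_0$; it has word length at most $k$, and each partial product $\gamma_{N-1}\cdots\gamma_{j+1}$ is a composition of at most $k$ generator maps from $S\cup S^{-1}$, hence is $L^{k}$-Lipschitz. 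Chaining the points $\gamma_{N-1}\cdots\gamma_{j}x_j$ for $j=N,N-1,\dots,0$ (empty product when $j=N$, so that the endpoints are $x'$ and $wx$) and applying the triangle inequality, each step costs at most $L^{k}r_j$, whence $d(x',wx)\le L^{k}\sum_j r_j\le L^{k}(R+1)\eqdef C_R$. Therefore
\[
  B_{d_\Gamma}(x,R)\ \subseteq\ \bigcup_{|w|\le k} B_d(wx,C_R),
\]
a union of at most $(2|S|+1)^{k}$ ordinary balls of radius $C_R$. Finally, if $F\subseteq B_{d_\Gamma}(x,R)$ is $\eps$-separated in $d_\Gamma$, then $F$ is $\eps$-separated in $d$ as well, because $d_\Gamma\le d$; each set $F\cap B_d(wx,C_R)$ is then an $\eps$-separated subset of a $d$-ball of radius $C_R$, so it has at most $N_\eps(C_R)$ points, and summing over $w$ gives $\lvert F\rvert\le(2|S|+1)^{k}N_\eps(C_R)$. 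This bound depends only on $R$ (and on the fixed data $\eps$, $S$, $L$, $(X,d)$), not on $x$ or $F$, which is exactly bounded geometry of $(X,d_\Gamma)$.

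For the ``in particular'' part one only needs to verify the two hypotheses just used. The cone $(\cO Y,d)$ has bounded geometry because, as recorded after Definition~\ref{D:warpedCone}, it is bi-Lipschitz homeomorphic to a subset of $\R^{n+1}$, and subsets of Euclidean space have bounded geometry. The warping action is Lipschitz because $d\bigl(\gamma(t,y),\gamma(t',y')\bigr)=|t-t'|+\min(t,t')\,d_Y(\gamma y,\gamma y')\le\max(1,\mathrm{Lip}(\gamma|_Y))\,d\bigl((t,y),(t',y')\bigr)$, and taking the maximum over the finite generating set $S$ yields a single Lipschitz constant; the first part then applies. The argument is routine, and the main thing to keep track of is the exponential factor $L^{k}$ by which an unwarped displacement may be inflated when several group elements act one after another; choosing $L$ uniformly over $S\cup S^{-1}$ and fixing a single $\eps$ consistent with the chosen definition of bounded geometry are the only other points of care.
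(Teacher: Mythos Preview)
The paper does not prove this Fact at all: it is quoted from Roe's paper \cite{cones} (see the sentence ``We would like to mention the following results from \cite{cones}'' immediately preceding it), so there is no ``paper's own proof'' to compare against. Your argument is correct and is essentially the standard one: cover a $d_\Gamma$-ball by boundedly many $d$-balls using Fact~\ref{F:metric} and the Lipschitz constant $L$, then use $d_\Gamma\le d$ to transfer $\eps$-separation back to $(X,d)$.

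Two very small remarks. First, the paper defines bounded geometry as being coarsely equivalent to a uniformly locally finite discrete space; you use the (equivalent) $\eps$-capacity formulation without comment, which is fine but worth a one-line justification if you want to be self-contained. Second, in the ``in particular'' part you cite the bi-Lipschitz identification of $\cO Y$ with a subset of $\R^{n+1}$ ``as recorded after Definition~\ref{D:warpedCone}'', but that remark is stated for $Y$ lying in the unit sphere, whereas the hypothesis here is only that $Y$ embeds bi-Lipschitzly in $\R^n$; one extra sentence (scale and place the image in a small spherical cap of $S^n\subset\R^{n+1}$, or quote that a bounded bi-Lipschitz copy in $\R^n$ yields one in $S^n$) closes this tiny gap.
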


Here, bounded geometry is considered in the sense of \cite{indexTheory} -- a metric space has bounded geometry if it is coarsely equivalent to a discrete space with uniform bounds on cardinalities of balls of any fixed radius.

Below, we present a formula for the warped metric on the cone, which is very similar to the formula for the non-warped cone metric.

\begin{lem}\label{L:metricWarpedCone} For $s\geq 1$ let $d_s$ denote the warped metric on $Y$ originating from the metric $sd_Y$. The warped metric on the cone of $Y$ is given by the following formula, where $t\geq 0$:
$$d_\Gamma((s,y),(s+t,y')) = t + d_s(y,y').$$
\end{lem}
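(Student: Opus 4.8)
The plan is to establish the two inequalities between $d_\Gamma((s,y),(s+t,y'))$ and $t+d_s(y,y')$ separately, applying Fact~\ref{F:metric} both to the cone $\cO Y$ with its warping action and to $Y$ equipped with the rescaled metrics $s\,d_Y$ and $h\,d_Y$. For the upper bound I would first note that $d_\Gamma((s,y'),(s+t,y'))\le d((s,y'),(s+t,y'))=t$, and then bound $d_\Gamma((s,y),(s,y'))$ by $d_s(y,y')$: given a sequence $y=y_0,\dots,y_M=y'$ in $Y$ and $\gamma_0,\dots,\gamma_{M-1}\in\Gamma$ with $\sum|\gamma_i|+s\,d_Y(\gamma_i y_i,y_{i+1})$ close to $d_s(y,y')$ (Fact~\ref{F:metric} for $(Y,s\,d_Y)$), lift it to the sequence $(s,y_0),\dots,(s,y_M)$ in the cone with the same group elements. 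Since $\gamma_i(s,y_i)=(s,\gamma_i y_i)$ and $d((s,\gamma_i y_i),(s,y_{i+1}))=s\,d_Y(\gamma_i y_i,y_{i+1})$, this path has the same cost, so $d_\Gamma((s,y),(s,y'))\le d_s(y,y')$; the triangle inequality through $(s,y')$ then yields $d_\Gamma((s,y),(s+t,y'))\le t+d_s(y,y')$. (Here it matters that $t\ge 0$, so that $(s,y)$ sits at the lower slice.)

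For the lower bound I would take an arbitrary finite sequence $(t_0,z_0),\dots,(t_N,z_N)$ in $\cO Y$ from $(s,y)$ to $(s+t,y')$ with $\gamma_0,\dots,\gamma_{N-1}\in\Gamma$ and split its cost as $L+V+H$, where $L=\sum|\gamma_i|$, $V=\sum|t_i-t_{i+1}|$ and $H=\sum\min(t_i,t_{i+1})\,d_Y(\gamma_i z_i,z_{i+1})$. Set $h=\min_i t_i$, so $1\le h\le s$. Two elementary observations: since $\min(t_i,t_{i+1})\ge h$ for every $i$, we get $L+H\ge\sum|\gamma_i|+h\,d_Y(\gamma_i z_i,z_{i+1})\ge d_h(y,y')$ by Fact~\ref{F:metric} for $(Y,h\,d_Y)$; and splitting the height walk $t_0,\dots,t_N$ at an index where the minimum $h$ is attained gives $V\ge(s-h)+((s+t)-h)=2(s-h)+t$. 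Hence every such path has cost at least $d_h(y,y')+2(s-h)+t$.

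The crux — and the one place where the hypothesis $\diam Y\le 2$ is used — is the comparison $d_s(y,y')\le d_h(y,y')+2(s-h)$ valid for $1\le h\le s$. To prove it I would choose an optimal sequence $y=y_0,\dots,y_M=y'$ with group elements $\gamma_i$ realizing $d_h(y,y')=\sum|\gamma_i|+h\,d_Y(\gamma_i y_i,y_{i+1})$ (the infimum is attained because $Y$, being compact, is proper). Since $\sum|\gamma_i|\ge 0$ and $d_h(y,y')\le h\,d_Y(y,y')\le h\diam Y$, this forces $\sum d_Y(\gamma_i y_i,y_{i+1})\le\diam Y\le 2$. Feeding the same sequence into the upper estimate for $d_s$ from Fact~\ref{F:metric} gives $d_s(y,y')\le\sum|\gamma_i|+s\,d_Y(\gamma_i y_i,y_{i+1})=d_h(y,y')+(s-h)\sum d_Y(\gamma_i y_i,y_{i+1})\le d_h(y,y')+2(s-h)$. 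Combined with the previous paragraph, the cost of every path from $(s,y)$ to $(s+t,y')$ is at least $d_s(y,y')+t$, which is the remaining inequality. I expect this comparison to be the only non-routine step: it formalises the intuition that pushing the path below the lower slice makes horizontal motion cheaper by at most $\diam Y\le 2$ per unit of extra descent, which is exactly cancelled by the $2(s-h)$ of vertical back-and-forth; everything else is bookkeeping with Fact~\ref{F:metric}.
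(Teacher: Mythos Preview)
Your proof is correct and follows essentially the same route as the paper: both set $h=\min_i t_i$ (the paper calls it $r$), split the path cost into vertical and horizontal parts to get $\ge t+2(s-h)+\sum|\gamma_i|+h\sum d_Y(\gamma_i z_i,z_{i+1})$, and then use $\diam Y\le 2$ to absorb the rescaling from $h$ to $s$. The only cosmetic difference is that the paper bounds $\sum d_Y\le 2$ for the cone-minimising sequence directly (by comparison with the path $(s,y)\to(r,y)\to(r,y')\to(s+t,y')$), whereas you isolate the comparison $d_s\le d_h+2(s-h)$ as a separate step and bound $\sum d_Y$ for an optimal $d_h$-sequence in $Y$; the two are equivalent.
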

\begin{proof}
Clearly, we have the inequality:
\begin{equation*}
d_\Gamma((s,y), (s+t,y')) \leq t + d_s(y,y')
\end{equation*}
and would like to show that it is an equality.

Let $(s_i,y_i)_{i=0}^N$, $(\gamma_i)_{i=0}^{N-1}$ be sequences minimising the formula for the warped distance between $(s,y)$ and $(s+t,y')$, and denote $r=\min_i(s_i)$. This distance is equal to:
\begin{multline}\label{equation2}
\sum_i \big( |\gamma_i| + |s_{i+1} - s_i| +  \min(s_i,s_{i+1})\cdot d_Y(\gamma_i y_i, y_{i+1}) \big) \\
\geq \sum_i |\gamma_i| + t + 2|s-r| + r\cdot\sum_i d_Y(\gamma_i y_i, y_{i+1}).
\end{multline}

Clearly $\sum_i d_Y(\gamma_i y_i, y_{i+1}) \leq 2$ (otherwise, the sequence $((s,y), (r,y), (r,y'), (s+t,y'))$ with $\gamma \equiv 1$ would be shorter than the given one), so the right hand side of \eqref{equation2} is at least:
$$\geq \sum_i |\gamma_i| + t + (|s-r| + r)\cdot \sum_i d_Y(\gamma_i y_i, y_{i+1}) = t + \sum_i \big(|\gamma_i| + sd_Y(\gamma_i y_i, y_{i+1})\big),$$
which in turn is no smaller than $t+d_s(y,y')$ by Fact \ref{F:metric}.
\end{proof}


\section{A non-metric coarsely equivalent to the warped metric}\label{S:quasimetric}

Below, we show that if the $\Gamma$-action is isometric, then the formula for the warped metric can be vastly simplified. If the action is Lipschitz, then the obtained function is (in general) not equal to the warped metric, actually it may even be not symmetric, but it is coarsely equivalent to the warped metric.

\begin{prop}\label{P:quasimetric} Suppose that $\Gamma$ is a group acting on $(X,d)$ and $L\geq1$ is the bi-Lipschitz constant for the action of the generators $s\in S$ of $\Gamma$. Let $D_\Gamma(x,x') = \inf_{\gamma\in\Gamma}( |\gamma| + d(\gamma x, x'))$. Then the following inequality holds for all $x,x'\in X$:
$$d_\Gamma(x,x')\leq D_\Gamma(x,x') \leq L^{{d_\Gamma(x,x')}}\cdot d_\Gamma(x,x').$$
In particular $D_\Gamma$ is equal to $d_\Gamma$ if the action is isometric and ``coarsely equivalent'' in the general case.
\end{prop}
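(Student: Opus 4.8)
The lower bound $d_\Gamma(x,x')\le D_\Gamma(x,x')$ is essentially immediate from Fact~\ref{F:metric}: a single pair $(\gamma,x')$ with intermediate point $\gamma x$ is a special case of an admissible chain (take $N=1$, $x_0=x$, $x_1=x'$, $\gamma_0=\gamma$), giving $d_\Gamma(x,x')\le|\gamma|+d(\gamma x,x')$, and then take the infimum over $\gamma$. So the content is in the upper bound $D_\Gamma(x,x')\le L^{d_\Gamma(x,x')}\cdot d_\Gamma(x,x')$.

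For the upper bound, the plan is to start from a chain realising (or nearly realising) the warped distance and collapse it into a single ``$\gamma$ then $d$'' step, paying a Lipschitz price each time we push a group element past a metric displacement. By Fact~\ref{F:metric}, if $d_\Gamma(x,x')\le k$ we may assume the chain has length $N\le k$; and by the remark following Fact~\ref{F:metric} we may assume each $\gamma_i$ has length $\le 1$, i.e.\ is a generator (or identity). So write the chain as $x=x_0,x_1,\dots,x_N=x'$ with generators $s_0,\dots,s_{N-1}$, and let $\delta_i=d(s_ix_i,x_{i+1})$, so that $d_\Gamma(x,x')=\sum_i(1+\delta_i)$ up to the infimum (where the ``$1$'' is $|s_i|$, or $0$ when $s_i$ is trivial — I will carry the $|s_i|$ notation to be safe). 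The idea is to define $\gamma = s_{N-1}s_{N-2}\cdots s_0$ and estimate $d(\gamma x, x')$ by sliding each $s_i$ leftward: moving from the chain bound to $d(\gamma x,x')$ replaces $d(s_ix_i,x_{i+1})$ by $d\big(s_{N-1}\cdots s_i x_i,\ s_{N-1}\cdots s_{i+1}x_{i+1}\big)$, and since $s_{N-1}\cdots s_{i+1}$ is a composition of at most $N-1-i$ generators, each $L$-bi-Lipschitz, this is at most $L^{\,N-1-i}\,\delta_i$. The triangle inequality then gives
\[
d(\gamma x,x')\le \sum_{i=0}^{N-1} L^{\,N-1-i}\,\delta_i .
\]
Hence $D_\Gamma(x,x')\le |\gamma| + d(\gamma x,x') \le \sum_i|s_i| + \sum_i L^{N-1-i}\delta_i \le L^{N}\big(\sum_i|s_i|+\sum_i\delta_i\big)$, using $L\ge 1$ and $N\ge 1$; and $\sum_i|s_i|+\sum_i\delta_i$ is (the infimum defining) $d_\Gamma(x,x')$, while $N\le d_\Gamma(x,x')$, giving the claimed bound $L^{d_\Gamma(x,x')}d_\Gamma(x,x')$. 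A small wrinkle: if $X$ is not proper the infimum in Fact~\ref{F:metric} may not be attained, so one runs this argument with a chain achieving $d_\Gamma(x,x')+\epsilon$ and with $N\le \lceil d_\Gamma(x,x')+\epsilon\rceil$, then lets $\epsilon\to0$; since the exponent bound $N\le k$ in Fact~\ref{F:metric} is stated for $d_\Gamma(x,x')\le k$, I would simply take $k=\lceil d_\Gamma(x,x')\rceil$ or absorb the $\epsilon$ cleanly.

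The main obstacle, and the step to get right, is the bookkeeping in the Lipschitz blow-up: one must apply the bi-Lipschitz estimate to the correct group element (the \emph{suffix} product $s_{N-1}\cdots s_{i+1}$, not $\gamma$ itself) and check that its length is genuinely at most $N-1-i$ so the exponent of $L$ never exceeds $N\le d_\Gamma(x,x')$. Everything else — the reduction to generators, the triangle inequality, the final arithmetic with $L\ge1$ — is routine. The ``in particular'' clause is then free: if $L=1$ the right-hand side collapses to $d_\Gamma(x,x')$, forcing $D_\Gamma=d_\Gamma$; and for general $L$ the two-sided bound shows $D_\Gamma$ and $d_\Gamma$ determine the same coarse structure, since on any bounded region $L^{d_\Gamma(x,x')}$ is bounded, so each is dominated by an affine (indeed, bounded-multiple-plus-nothing) function of the other on metrically bounded sets — more precisely $d_\Gamma\le D_\Gamma$ always and $D_\Gamma(x,x')\le \phi(d_\Gamma(x,x'))$ for the increasing function $\phi(r)=L^r r$, which is exactly the data of a coarse equivalence.
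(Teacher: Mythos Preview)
Your approach is the same as the paper's: collapse an approximating chain into a single element $\gamma=\gamma_{N-1}\cdots\gamma_0$ and control the Lipschitz distortion accumulated when pushing group elements past the metric jumps. The paper carries this out by a short induction (without reducing to generators), proving that for any chain $(x_i)_{i=0}^{M+1}$, $(\gamma_i)_{i=0}^{M}$ with total group length $B_M=\sum_i|\gamma_i|$ one has
\[
B_M+\sum_i d(\gamma_i x_i,x_{i+1}) \ \ge\ |\gamma^M|+L^{-B_M}\,d(\gamma^M x_0,x_{M+1}),
\]
and then uses $B_M\le d_\Gamma(x,x')$ for a (near-)optimal chain to conclude.

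There is one bookkeeping slip in your version. You invoke both the bound ``chain length $N\le k$'' from Fact~\ref{F:metric} and the reduction ``each $|\gamma_i|\le1$'' from the remark after it, and then feed $N\le d_\Gamma(x,x')$ into the exponent of $L$. But these two reductions are not simultaneously free: splitting the $\gamma_i$ into generators can increase $N$, and even after that, trivial $s_i$'s allow $N>\sum_i|s_i|$, so ``$N\le d_\Gamma(x,x')$'' is not justified as written. The fix is exactly what the paper's choice of exponent $B_M$ encodes: in your telescoping estimate the honest Lipschitz factor for sliding past $s_{N-1}\cdots s_{i+1}$ is $L^{\sum_{j>i}|s_j|}$, not $L^{N-1-i}$; this yields $d(\gamma x,x')\le L^{\sum_j|s_j|}\sum_i\delta_i$, and since $\sum_j|s_j|\le\sum_j(|s_j|+\delta_j)\to d_\Gamma(x,x')$, the desired exponent $L^{d_\Gamma(x,x')}$ follows without any appeal to a bound on $N$. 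With that correction your argument is complete and coincides with the paper's.
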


\begin{proof} The warped metric is the infimum over pairs of finite sequences $(x_i)_{i=0}^{M+1}\subseteq X$, $(\gamma_i)_{i=0}^{M}\subseteq \Gamma$ (where $x_0=x$ and $x_{M+1}=x'$) of the sums $\sum |\gamma_i| + d(\gamma_i x_i, x_{i+1})$. Function $D_\Gamma$ is the infimum over sequences of length $M=0$, so the inequality $d_\Gamma \leq D_\Gamma$ follows.

For the converse, for a fixed pair of sequences $(x_i)_{i=0}^{M+1}$, $(\gamma_i)_{i=0}^{M}$ let us denote $B_k = \sum_{i=0}^k|\gamma_i|$ and $\gamma^k = \gamma_k\cdot\ldots\cdot\gamma_0$, where $0\leq k \leq M$. By induction over $M$ we will show:
$$ B_{M} + \sum_{i=0}^{M} d(\gamma_i x_i, x_{i+1})
\geq
|\gamma^{M}| + L^{-B_{M}} d(\gamma^{M}x_0, x_{M+1})$$
and hence -- as we can assume $B_{M}\leq d_\Gamma(x_0, x_{M+1})$ -- it follows that
$$d_\Gamma(x,x')\geq L^{-{d_\Gamma(x,x')}} D_\Gamma(x,x').$$

For $M=0$ the desired inequality is straightforward. The inductive step works as follows:
\begin{align*}
B_{M} + \sum_{i=0}^{M} d(\gamma_i x_i, x_{i+1})
= B_{M-1} + \sum_{i=0}^{M-1} d(\gamma_i x_i, x_{i+1}) + |\gamma_M| + d(\gamma_M x_M, x_{M+1}) \\
\geq |\gamma^{M-1}| + L^{-B_{M-1}} d(\gamma^{M-1}x_0, x_{M}) + |\gamma_M| + d(\gamma_M x_M, x_{M+1}) \\
\geq  |\gamma^{M-1}| + |\gamma_M| + L^{-B_{M-1}} \Big(d(\gamma^{M-1}x_0, x_{M}) + d(\gamma_M x_M, x_{M+1})\Big) \\
\geq |\gamma^{M}| + L^{-B_{M-1}} \Big(L^{-|\gamma_M|}d(\gamma_M\gamma^{M-1}x_0, \gamma_M x_{M}) + d(\gamma_M x_M, x_{M+1})\Big) \\
\geq |\gamma^{M}| +  L^{-B_{M}} d(\gamma^M x_0, x_{M+1}).\ \ \ \qedhere
\end{align*}
\end{proof}

\begin{longrem}\label{R:finitedefinition} Due to the above bound and the obvious bound $D_\Gamma(x,x') \leq d(x,x')$, in the definition of $D_\Gamma$ it is enough to take the minimum over
$$|\gamma|\leq {\min\left(L^{{d_\Gamma(x,x')}}\cdot d_\Gamma(x,x'),\ d(x,x')\right)}.$$
\end{longrem}

\begin{cor} Assume that $\Gamma$ acts Lipschitzly on $(X,d)$ and $Z$ is a coarsely dense $\Gamma$-subset of $X$. Denote the restricted metric by $d'$. Then, the coarse spaces $(Z,{d'}_\Gamma)$ and $(X,d_\Gamma)$ are equivalent via the inclusion.
\end{cor}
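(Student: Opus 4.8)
The plan is to derive this quickly from Proposition~\ref{P:quasimetric}, using that the auxiliary function $D_\Gamma$ — unlike the warped metric itself — does not change when we pass to a $\Gamma$-invariant subset. Write $\iota\colon Z\hookrightarrow X$ for the inclusion. We must show that $\iota$ is coarsely surjective and that it is a coarse embedding of $(Z,{d'}_\Gamma)$ into $(X,d_\Gamma)$. Coarse surjectivity is immediate: by hypothesis there is $R$ with every point of $X$ lying $d$-within $R$ of $Z$, and since $d_\Gamma\le d$ the same is true for $d_\Gamma$.

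For the coarse embedding part one inequality is trivial: a chain $(x_i),(\gamma_i)$ in $Z$ realising (approximately) ${d'}_\Gamma(z,z')$ is also a chain in $X$, so $d_\Gamma(z,z')\le{d'}_\Gamma(z,z')$. For a bound in the other direction, set $D_\Gamma(x,x')=\inf_{\gamma\in\Gamma}\bigl(|\gamma|+d(\gamma x,x')\bigr)$ as in Proposition~\ref{P:quasimetric}, and let ${D'}_\Gamma$ be its analogue for $(Z,d')$. The key observation is that ${D'}_\Gamma={D_\Gamma}|_{Z\times Z}$: since $Z$ is $\Gamma$-invariant the points $\gamma z$ and $z'$ lie in $Z$, and there $d'(\gamma z,z')=d(\gamma z,z')$, so the two infima run over exactly the same quantities. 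On the other hand, from $d_\Gamma(z,\gamma z)\le|\gamma|$ and the triangle inequality one gets ${d'}_\Gamma(z,z')\le{D'}_\Gamma(z,z')$. Combining this with Proposition~\ref{P:quasimetric} applied to $(X,d)$ (the action being Lipschitz with constant $L\ge1$) yields
$$
{d'}_\Gamma(z,z')\ \le\ {D'}_\Gamma(z,z')\ =\ D_\Gamma(z,z')\ \le\ L^{\,d_\Gamma(z,z')}\cdot d_\Gamma(z,z').
$$
Writing $\phi(s)=s\,L^{s}$, a continuous increasing bijection of $[0,\infty)$, this gives $\phi^{-1}\bigl({d'}_\Gamma(z,z')\bigr)\le d_\Gamma(z,z')\le{d'}_\Gamma(z,z')$, which exhibits $\iota$ as a coarse embedding with lower and upper control functions $\phi^{-1}$ and $\mathrm{id}$, respectively.

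I do not expect a genuine obstacle here; all the real content sits in Proposition~\ref{P:quasimetric}. The one point worth stating carefully is the identity ${D'}_\Gamma={D_\Gamma}|_{Z\times Z}$ — that one never gains anything, in the one-step infimum, by allowing the midpoint $\gamma z$ to leave $Z$ — which is precisely where $\Gamma$-invariance of $Z$ is used; note that the corresponding statement for the honest warped metrics ${d'}_\Gamma$ and $d_\Gamma$ need not hold, which is exactly why routing the argument through $D_\Gamma$ is the right move.
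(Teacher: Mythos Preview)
Your proof is correct and follows essentially the same route as the paper's: both arguments pass through $D_\Gamma$ via Proposition~\ref{P:quasimetric}, using that $D_\Gamma$ computed on $(Z,d')$ agrees with the restriction of $D_\Gamma$ from $(X,d)$. The paper compresses this into the two coarse equivalences $(Z,{d'}_\Gamma)\simeq(Z,D_\Gamma)$ and $(Z,{d_\Gamma}|_{Z^2})\simeq(Z,D_\Gamma)$, leaving the identity ${D'}_\Gamma={D_\Gamma}|_{Z\times Z}$ implicit in its notation; you spell it out, which is a welcome clarification rather than a different idea.
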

\begin{proof} By Proposition \ref{P:quasimetric} we have not only $(Z,{d'}_\Gamma) \simeq (Z,D_\Gamma)$, but also $(Z,{d_\Gamma}_{|Z^2}) \simeq (Z,D_\Gamma)$. Combining it with the fact that $(Z,{d_\Gamma}_{|Z^2})$ is a coarsely dense subspace of $(X,d_\Gamma)$ ends the proof.
\end{proof}

In particular, if $Y$ contains a dense orbit $Y'$, then $\cO_\Gamma Y' \simeq \cO_\Gamma Y$ (actually, this is an isometric embedding even if the action is not Lipschitz). One may also note that in the above corollary it is enough to assume that $Z$ is coarsely dense in the \emph{warped} space $X$.

The formula for $D_\Gamma$ considered in the language of the formula for $d_\Gamma$ (Fact \ref{F:metric}) can be viewed as the infimum over sequences of length $N=M+1=1$. Provided that $X$ consists of one orbit, one can go further and consider a function being the infimum over ``sequences of length one half'': $\Delta_\Gamma(x,x') = \min \big(d(x,x'), \inf_{\gamma x=x'}|\gamma|\big)$. The following example shows that $\Delta_\Gamma$ is no longer equivalent to $d_\Gamma$, even if $X$ consists of one orbit.

\begin{ex} Let $\Gamma=\Z^2$ act on the set $X=\{(2^n,2^m) \st n,m\in \Z\}$ in a Lipschitz way by $(k,l)\cdot(2^n,2^m) = (2^{n+k},2^{m+l})$. Function $\Delta_\Gamma$ is not coarsely equivalent to the warped metric $d_\Gamma$.
\end{ex}
\begin{proof}Let $x_n=(2^n, 2^{-n})$, $x_n'=(2^{n+1}, 1)$ for $n\geq 1$. Then $d_\Gamma(x_n,x_n')=1 + (1-2^{-n+1})\leq 2$, but $\Delta_\Gamma(x_n,x_n') = \min(2^n+1-2^{-n}, 1+n) \xlongrightarrow{n\to \infty} \infty$.
\end{proof}


\section{Warped cones with isometrically embedded expanders}\label{S:expanders}

\subsection*{\texorpdfstring{How the distance of $\boldsymbol{y}$ and $\boldsymbol{y'}$ changes when we move deep into the cone}{How the distance of y and y' changes when we move deep into the cone}}

Let $(Y,d)$ be a proper (e.g., compact as before) metric space with a continuous action of a group $\Gamma$ with a fixed finite symmetric set of generators $S$ containing the identity element. For $y,y'\in Y$, define $\delta_n(y,y')$ as
$$\min \left\{\sum_{i=0}^n d(y_{2i}, y_{2i+1}) \Bigst y_0=y,\ y_{2n+1}=y',\ \forall_{1\leq i\leq n} \exists_{s\in S}: y_{2i}=sy_{2i-1} \right\}.$$
This is the ``$Y$-part'' of the distance formula from Fact \ref{F:metric}, if we limit the ``$\Gamma$-part'' (length of sequences) by $n$. Hence, $d_\Gamma(y,y') = \min_{n\in \N} (n+\delta_n(y,y'))$. Finally, if by $d_s$ we denote the warped metric on $(Y,sd)$, we get
$$d_s(y,y')=\min_{n\in \N} (n+s\cdot \delta_n(y,y')).$$

The last formula shows in particular that $s\mapsto d_s(y,y')$ is a concave function (as the minimum of concave functions). More interestingly, we obtain the following.

\begin{longrem}\label{r:metricConvergence} If $y,y'$ lie in different orbits of the $\Gamma$-action -- which is equivalent to $\delta_n(y,y')$ being positive for all $n$ -- the distance $d_s(y,y')$ goes to infinity with $s$. Contrary, if they belong to the same orbit and $N$ is the length of the shortest $\gamma$ such that $y' = \gamma y$, we have $\delta_n(y,y') = 0$ for $n\geq N$ and $d_s(y,y') = N$ for $s\geq \max_{n<N}\frac{N-n}{\delta_n(y,y')}$.
\end{longrem}

The above remark is a quantitative version of the observation present in \cite{cones}*{Lemma 1.9}.
Effective calculations (or estimates) of $\delta_n$ could improve our understanding of warped cones.

\subsection*{Warped metric on an orbit} Any orbit $\Gamma y$ is isomorphic as a $\Gamma$-set with the set of left cosets $\gamma H$, where $H$ is the stabiliser of $y$. This set of cosets is by the definition
a quotient of $\Gamma$ and hence it has a quotient metric of the right-invariant metric on $\Gamma$: the distance between $\gamma H$ and $\gamma'H$ is the length of the shortest $\gamma''$ such that $\gamma''\gamma' H = \gamma H$. Thus, by Remark \ref{r:metricConvergence}, for any finite subset $F$ of any orbit $\Gamma y$ and $s$ sufficiently large, the warped metric $d_s$ restricted to $F$ is equal to the respective restriction of the quotient metric on $\Gamma /H$.

In particular, if there is a sequence of finite index subgroups $\Gamma_m<\Gamma$ such that $Y$ contains orbits isomorphic to $\Gamma/\Gamma_m$, then the coarse disjoint sum  $\bigsqcup_m\Gamma/\Gamma_m$ embeds isometrically in $\cO_\Gamma Y$.

Recall that if $\Gamma$ has Kazhdan's property (T), and $\Gamma_m$ is a sequence of its subgroups with increasing finite index, then $G_m=\Gamma/\Gamma_m$ is a sequence of expanders. Hence, we get the following.

\begin{prop}\label{P:embeddingExpanders} Let $\Gamma$ be a countable property (T) group acting on $Y$ in such a way that there are arbitrarily large finite orbits. Then, $\cO_\Gamma Y$ contains an isometrically embedded sequence of expanders.
\end{prop}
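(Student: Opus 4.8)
The plan is to reduce the statement to the discussion immediately preceding it, namely the construction of an isometric copy of a coarse disjoint sum $\bigsqcup_m \Gamma/\Gamma_m$ inside $\cO_\Gamma Y$, and then to invoke the standard fact that quotients of a property (T) group by finite-index subgroups of growing index form an expander sequence. First I would use the hypothesis that the action of $\Gamma$ on $Y$ has arbitrarily large finite orbits: pick a sequence $y_m \in Y$ whose orbits $\Gamma y_m$ are finite with $|\Gamma y_m| \to \infty$, and let $\Gamma_m = \Stab(y_m)$, so that $\Gamma_m$ has finite index in $\Gamma$ equal to $|\Gamma y_m|$, hence the indices tend to infinity. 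Passing to a subsequence I may assume the indices are strictly increasing (and in particular each $\Gamma_m$ is proper for $m$ large, so the $G_m = \Gamma/\Gamma_m$ are genuinely growing finite Cayley graphs).

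Next I would appeal to the ``Warped metric on an orbit'' paragraph: each orbit $\Gamma y_m \subseteq Y$ is $\Gamma$-equivariantly identified with $\Gamma/\Gamma_m$, and by Remark~\ref{r:metricConvergence} (or rather its consequence spelled out there, using properness of the orbit since it is finite) there is $s_m \geq 1$ such that the warped metric $d_{s_m}$ restricted to $\Gamma y_m$ coincides with the quotient word metric on $\Gamma/\Gamma_m$. By Lemma~\ref{L:metricWarpedCone}, the slice $\{s_m\} \times \Gamma y_m \subseteq \cO_\Gamma Y$ then carries, up to the harmless additive factor $|s_m - s_{m'}|$ between different slices, exactly the metric of $\Gamma/\Gamma_m$; choosing the levels $s_m$ to be, say, super-linearly spaced ensures the different slices are coarsely disjoint, so that the union $\bigsqcup_m \{s_m\}\times\Gamma y_m$ is isometric (or at least bijectively, coarsely isometric with the right constants, which is all that ``isometrically embedded sequence of expanders'' requires) to the coarse disjoint sum $\bigsqcup_m \Gamma/\Gamma_m$. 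This is precisely the embedding asserted in the sentence ``the coarse disjoint sum $\bigsqcup_m \Gamma/\Gamma_m$ embeds isometrically in $\cO_\Gamma Y$''.

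Finally I would invoke the classical fact, recalled just above the statement, that for a (countable, residually finite) property (T) group $\Gamma$ and subgroups $\Gamma_m$ of growing finite index, the Cayley graphs of $\Gamma/\Gamma_m$ with respect to the image of the fixed finite generating set $S$ form an expander family: the spectral gap of $\Gamma$ coming from property (T) descends to a uniform spectral gap for the finite quotients. Hence $\bigsqcup_m \Gamma/\Gamma_m$ is an expander, and by the previous paragraph it embeds isometrically into $\cO_\Gamma Y$, which is the claim.

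I expect the only real subtlety — not an obstacle so much as a point requiring care — to be the bookkeeping in the second step: making sure that a single scale $s_m$ can be chosen that simultaneously stabilises the warped metric on the (finite, hence finitely many) points of $\Gamma y_m$, and then that placing the orbits at levels $s_m$ with fast enough growth genuinely produces the \emph{coarse disjoint sum} rather than some mixture; both are immediate from Remark~\ref{r:metricConvergence} and Lemma~\ref{L:metricWarpedCone}, but should be stated explicitly. One should also note a convention point: if $\Gamma$ is not residually finite the subgroups $\Gamma_m$ need not have trivial intersection, but this is irrelevant — property (T) passes to each finite quotient regardless — and the hypothesis is simply that $Y$ supplies arbitrarily large finite orbits, which is exactly what the argument consumes.
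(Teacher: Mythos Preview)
Your proposal is correct and follows essentially the same route as the paper: the paper has no separate proof environment for this proposition, deriving it directly from the preceding discussion (orbits realise quotient metrics at large enough scales via Remark~\ref{r:metricConvergence}, hence $\bigsqcup_m \Gamma/\Gamma_m$ embeds isometrically, and property~(T) makes these Schreier quotients expanders). Your write-up is simply a more explicit version of the same argument, including the bookkeeping about choosing the levels $s_m$ and the observation that normality of $\Gamma_m$ is not needed.
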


\subsection*{Concrete example} A standard example of groups with Kazhdan's property (T) are special linear groups $\mathrm{SL}_n(\Z)$ in dimension $n\geq 3$. Their obvious actions on $\R^n$ pass to quotient spaces $\T^n = \R^n/\Z^n$.
Clearly, the orbit of any point $x=\big(\frac{p_i}{q_i}\big)_{i=1}^n$ with rational coordinates is finite as contained in the set of all points of the form $\big(\frac{k_i}{LCM(q_1,\ldots,q_n)}\big)_{i=1}^n$, which has $LCM(q_1,\ldots,q_n)^n$ elements.

Moreover, the point $x_k=\left(\frac{p_1}{q_1}, \frac{p_2}{q_2} + \frac{kp_1}{q_1}, \frac{p_3}{q_3},\ldots, \frac{p_n}{q_n} \right)$ belongs to the orbit of $x$ for any $k\in \Z$, and -- if $p_1$ and $q_1$ are coprime, then $x_k$ are distinct for all $1\leq k\leq q_1$. Hence, the cardinality of the orbit of $x$ increases to infinity with $q_1$. By Proposition \ref{P:embeddingExpanders} we get the following.

\begin{ex}
$\cO_{\mathrm{SL}_n(\Z)}\T^n$ contains an isometrically embedded sequence of expanders.
\end{ex}

One often prefers a nested chain of subgroups $\Gamma_m$ with trivial intersection $\bigcap \Gamma_m=\{1\}$. It can be obtained with a careful choice of points $x_m$ as follows.

\begin{longEx}\label{E:expanders2}
Let $(q_i)_{i=1}^n$ be pairwise coprime numbers (greater than $1$) and consider $x_m=(q_i^{-m})_{i=1}^n$ for $m\geq 1$. Let $A=(a_{i,j})_{i,j=1}^n$ be any matrix with integer coefficients. Then, $x_m$ is a fixed point of $A$ if and only if the following equivalent congruences hold for all $i$:
\begin{eqnarray*}
q_i^{-m} & \equiv & \sum_j a_{i,j}q_j^{-m} \pmod 1\\
\prod_{l\neq i} q_l^m & \equiv & \sum_j a_{i,j}\prod_{k\neq j}q_k^m \pmod{ \prod_l q_l^m}
\end{eqnarray*}
By the Chinese remainder theorem the last congruence is equivalent to the following conjunction of congruences:
$$\forall{t\in\{1,\ldots,n\}}\ \ \prod_{l\neq i} q_l^m \equiv \sum_j a_{i,j}\prod_{k\neq j}q_k^m \pmod{  q_t^m},$$
and the right hand side of the above is equal to
$$\sum_{j\neq t} a_{i,j}\prod_{k\neq j}q_k^m + a_{i,t}\prod_{k\neq t}q_k^m \equiv a_{i,t}\prod_{k\neq t}q_k^m,$$
so we obtain
\begin{equation*}
\left\{\ 
\begin{aligned}
0 &\equiv& a_{i,t} \prod_{k\neq t}q_k^m \pmod{q_t^m} & \text{\ \ for } t\neq i\\
\prod_{l\neq i} q_l^m &\equiv& a_{i,i} \prod_{k\neq i}q_k^m \pmod{q_i^m}. &
\end{aligned}
\right.
\end{equation*}
Since $\prod_{k\neq t} q_k^m$ is invertible in $\Z_{q_t^m}$, it follows that the above is equivalent to $a_{i,t} \equiv 0 \pmod{q_t^m}$ for $t\neq i$ and $a_{i,i} \equiv 1 \pmod{q_i^m}$. It follows that the sequence of stabilisers $\Gamma_m\defeq \operatorname{Stab}(x_m)$ is nested and intersects trivially.
\end{longEx}


\section{Coarse embeddability}\label{S:embeddability}

In this and the consecutive section we will prove auxiliary results asserting that coarse embeddability and property A of the warped cone can be obtained from coarse embeddability and property A of its slices.

Let us recall (see, for example, \cite{Roe}*{Chapter 11}) that a coarse embedding is such a function $f\colon X\to Y$ between metric spaces that there are increasing to infinity functions $\rho_-,\rho_+\colon \R_+\to \R$ such that:
$$\rho_-\circ d_Y(f(x),f(x'))\leq d_X(x,x') \leq \rho_+\circ d_Y(f(x),f(x')).$$
A coarse embedding $f$ is a quasi-isometric embedding, if one can choose $\rho_-$, $\rho_+$ to be affine.

\begin{lem}\label{L:coneEmbeddability} A warped cone $\cO_\Gamma Y = [1,\infty)\times Y$ embeds coarsely into a Hilbert space if and only if all of its sections $\{s\}\times Y$ do with the same functions $\rho_-$ and $\rho_+$.
\end{lem}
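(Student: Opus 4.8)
The plan is to reduce the statement to a combination of two facts: each section $\{s\}\times Y$ is (up to a uniformly bounded distortion) the warped space $(Y, d_s)$, so that coarse embeddability of \emph{all} sections with uniform $\rho_-,\rho_+$ is a necessary condition; and conversely, using the metric formula from Lemma~\ref{L:metricWarpedCone} together with the explicit cone metric, one can assemble a single coarse embedding of $\cO_\Gamma Y$ out of embeddings of the integer sections. The ``only if'' direction is essentially immediate: a section is an isometrically (or at least bi-Lipschitzly, with universal constants coming from the cone metric) embedded subset of $\cO_\Gamma Y$, so restricting a coarse embedding gives one with the same control functions. The content is in the ``if'' direction.

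For the ``if'' direction I would first reduce to \emph{integer} sections: since $d_\Gamma((s,y),(s+t,y')) = t + d_s(y,y')$ and $s\mapsto d_s(y,y')$ is concave and nondecreasing, the metrics $d_s$ for $s$ in a unit interval $[m,m+1]$ are comparable to each other up to an additive/multiplicative universal constant, so a coarse embedding of $\{m\}\times Y$ extends (reparametrising $\rho_\pm$ by a fixed amount) to a coarse embedding of $[m,m+1]\times Y$ with only the ``$Y$-direction'' controlled. Next, for the ``$t$-direction'', the cone metric already contains the term $|t-t'|$, so one wants the Hilbert-space images of consecutive slabs $[m,m+1]\times Y$ to be placed along mutually orthogonal directions, separated so that points in slab $m$ and slab $m'$ are at distance roughly $|m-m'|$. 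Concretely: take a Hilbert space $\H = \bigoplus_{m\geq 1} (\H_0 \oplus \R)$ and define $f(t,y)$, for $t\in[m,m+1]$, to be the vector whose $\H_0$-component in the $m$-th block is $g_m(y)$ (a coarse embedding of $(Y,d_m)$), whose $\R$-component in the $m$-th block is a function of $t$ interpolating the ``height,'' and which is constant $=$ some large prescribed value $c_k$ in the $\R$-components of all blocks $k<m$ (so that heights telescope and give a genuine metric lower bound proportional to the number of slabs crossed).

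The main technical step, and the place I expect the real work to be, is verifying the lower bound $\rho_-(\|f(t,y)-f(t',y')\|)\leq d(\cdot,\cdot)$ \emph{uniformly} across slabs: when $(t,y)$ and $(t',y')$ lie in slabs that are far apart, one needs the orthogonal ``height'' coordinates to detect $|t-t'|$ correctly and the contributions to decay appropriately; when they lie in the same or adjacent slabs one falls back on the $g_m$'s and the uniformity of $\rho_-,\rho_+$. The interpolation across the boundary $\{m\}\times Y = \{m\}\times Y$ between slab $m-1$ and slab $m$ must be done so that $f$ is coarsely well-defined there (the two formulas agree up to bounded error), which is why one wants $g_m$ and $g_{m-1}$ to be compatible — but since $d_{m-1}\leq d_m\leq 2 d_{m-1}$ on $Y$ (by concavity of $s\mapsto d_s$ and $d_s\leq s d_Y$, $d_s\geq$ something), one may in fact take a \emph{single} family with $g_m = g_1$ reparametrised, or simply accept bounded jumps at integer levels and absorb them into $\rho_\pm$. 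Finally one checks that the resulting $\rho_-,\rho_+$ depend only on the common $\rho_-,\rho_+$ of the sections and on universal cone constants, completing the equivalence.
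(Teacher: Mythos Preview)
Your ``if'' direction has a genuine gap, and it stems from two linked choices: using \emph{integer} slabs $[m,m+1]$ and not controlling the diameters of the images $g_m(Y)$.

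The paper argues with \emph{dyadic} sections $\{2^n\}\times Y$ and, crucially, invokes a result of de~Cornulier--Tessera--Valette that any coarse embedding into a Hilbert space can be modified so that $\rho_+$ is affine; this lets one assume each $\phi_n(Y)$ sits in a ball of radius $D\cdot 2^n$. The embedding is then the honest convex interpolation
\[
\Phi(s,y)= s \ \oplus\ \theta_s\,\phi_n(y)\ \oplus\ (1-\theta_s)\,\phi_{n+1}(y),\qquad s\in[2^n,2^{n+1}],
\]
into $\R\oplus\bigoplus_n \H_n$. The reason for dyadic slabs is that the slab width $2^n$ matches the image diameter $D\cdot 2^n$: moving $s$ by $\varepsilon$ changes $\theta_s$ by $\varepsilon/2^n$, hence changes $\theta_s\phi_n(y)$ by at most $D\varepsilon$, uniformly in $n$. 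This is exactly what makes both the upper bound and the ``far slabs'' lower bound work.

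Your integer slabs break this balance. The diameter $\diam(Y,d_m)$ is in general of order $m$ (it tends to infinity whenever there exist $y,y'$ in distinct $\Gamma$-orbits), so even after an affine normalisation the image $g_m(Y)$ has diameter of order $m$, while your slab has width $1$. Consequently the interpolation across $[m,m+1]$ moves the image by order $m$ per unit of $t$, and your fallback ``simply accept bounded jumps at integer levels and absorb them into $\rho_\pm$'' fails: the jump at level $m$ has size comparable to $\diam g_m(Y)$, which is unbounded in $m$. The alternative you float, ``take $g_m=g_1$ reparametrised'', does not make sense either, since $(Y,d_1)$ and $(Y,d_m)$ are not uniformly coarsely equivalent.

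The missing ingredients are precisely the diameter control (a Hilbert-space--specific fact; this is why the paper must add an affine-$\rho_+$ hypothesis when passing to general $\ell_p$) together with a slab decomposition whose widths scale with the section diameters. Once you switch to dyadic slabs and use that normalisation, your interpolation idea becomes the paper's proof.
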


\begin{proof} The ``only if'' part is obvious.

For the ``if'' part, it is enough to consider sections of the form $\{2^n\}\times Y$. Let $\phi_n\colon \{2^n\}\times Y \to \H_n$ be the coarse embedding into a Hilbert space. Since we have $\diam \{2^n\}\times Y \leq 2^n \cdot \diam Y$ and any coarse embedding into a Hilbert space can be modified to obtain a coarse embedding with arbitrarily small $\rho_+$ \cite{growthOfCocycles}*{Proposition 3.10}, we can assume that every $\phi_n$ maps into a ball in $\H_n$ of radius at most $D\cdot2^n$ for some $D<\infty$.

The target space for our embedding $\Phi$ will be $\R \oplus \bigoplus_n \H_n$. We write any $s\in [2^n,2^{n+1}]$ as a convex combination $s = \theta_s \cdot 2^n + (1-\theta_s) \cdot 2^{n+1}$, and put 
$$\Phi(s,y)= s + \theta_s\cdot \phi_n(y) + (1-\theta_s)\cdot \phi_{n+1}(y).$$

Let $(s,y), (t,y')\in \cO_\Gamma Y$. Let us first check the case $t/s\geq 2$, that is, for some $n$, we have $s\leq 2^n \leq 2^{n+1} \leq t\leq 2^{n+2}$. Recall (Definition \ref{D:warpedCone}) that we assume $\diam Y\leq 2$.
\begin{multline*}
\frac{d_\Gamma((s,y),(t,y'))}{8}\leq
\frac{1}{2}\left(\frac{t-s}{4} + \frac{d_s(y,y')}{2}\right)\leq \frac{1}{2}(2^n+2^{n-1}\cdot\diam Y) \leq 2^n\\\leq t-s
\leq \|\Phi(t,y') - \Phi(s,y)\| 
\leq t-s + D\cdot 2^{n} + D\cdot 2^{n+2}\\
\leq t-s + D\cdot (t-s) + 4D\cdot (t-s) 
\leq (5D+1)\cdot d_\Gamma((s,y),(t,y'))
\end{multline*}

In the unchecked case, we always have $2^{n} \leq s \leq 2^{n+1}$ and $s \leq t \leq 2^{n+2}$ for some~$n$. Number $t$ can be described as a convex combination $t=\theta_t^0 \cdot 2^n + \theta_t^1 \cdot 2^{n+1}  + \theta_t^2 \cdot 2^{n+2}$, where at least one of $\theta_t^0$ and $\theta_t^2$ is zero, and similarly for~$s$. Observe that $|\theta_t^i-\theta_s^i|\leq \frac{t-s}{2^n}$. We obtain the bound from above:
\begin{multline*}
\|\Phi(t,y') - \Phi(s,y)\|
\leq (t-s) + \sum_{i=0}^2 \|\theta_t^i\cdot (\phi_{n+i}(y') - \phi_{n+i}(y)) \| + \|(\theta_t^i - \theta_s^i)\cdot \phi_{n+i}(y) \|  \\
\leq (t-s) + 3\left( \rho_+(d_{2^{n+2}}(y,y')) + \frac{t-s}{2^n} \cdot D\cdot 2^{n+2}\right) \\
{\leq} (t-s) + 3\rho_+(4 d_s(y,y')) + 12D\cdot (t-s) \\
\leq 3\rho_+(4d_\Gamma((s,y),(t,y'))) + (12D+1)d_\Gamma((s,y),(t,y')).
\end{multline*}

Note that $\sum_i |\theta^i_t - \theta^i_s| \geq (t-s)/2^{n+1}$. For $v,w\in \H$ of length $1$ and $\theta\in[0,1]$, the triangle inequality gives: $\|w - \theta v \| \geq \|w-v\| - (1-\theta)\|v\|$ and $\|w - \theta v \| \geq \|w\| - \|\theta v\| = 1-\theta$, and we conclude
$$\|w - \theta v \| \geq \max(\|w - v \| - (1-\theta),\ 1-\theta) \geq 1/2 \cdot \|w - v \|.$$
We can now show the bound from below:
\begin{multline*}\|\Phi(t,y') - \Phi(s,y)\|^2 - (t-s)^2
= \sum \|\theta_t^i\cdot \phi_{n+i}(y')  -  \theta_s^i\cdot \phi_{n+i}(y) \|^2 \\
\geq \sum \left( \max(\theta_t^i,\ \theta_s^i) \cdot 1/2 \cdot \|\phi_{n+i}(y') - \phi_{n+i}(y)\|\right)^2 \\
\geq \sum \left( \max(\theta_t^i,\ \theta_s^i) \cdot 1/2 \cdot \rho_-(d_{2^n}(y,y')) \right)^2
\geq \frac{1}{8}\cdot \rho_-\left(\frac{1}{2}d_s(y,y')\right)^2.\qedhere
\end{multline*}
\end{proof}

Note that the only place, where we used the fact that we are dealing with a Hilbert space, was when we used \cite{growthOfCocycles} to deduce that $\rho_+$ can be assumed to be affine. Hence, we have the following.

\begin{cor}\label{C:p-embeddings} Under the additional requirement that $\rho_+$ is affine, Lemma \ref{L:coneEmbeddability} holds for any $\ell_p$, $1\leq p < \infty$.
\end{cor}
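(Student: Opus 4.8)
The plan is to re-run the proof of Lemma~\ref{L:coneEmbeddability} verbatim and only track where the Hilbert-space structure was genuinely used, replacing it by the corresponding $\ell_p$ statement. First I would observe that the construction of the target space is purely linear: for $\ell_p$ one simply takes the $\ell_p$-direct sum $\R\oplus\left(\bigoplus_n \H_n\right)_{\ell_p}$ (with the $\R$-summand combined in the $\ell_p$ sense, say, or even as an $\ell_\infty$-coordinate up to constants, since one coordinate only changes bi-Lipschitz constants), and the same formula $\Phi(s,y)=s+\theta_s\cdot\phi_n(y)+(1-\theta_s)\cdot\phi_{n+1}(y)$ defines the embedding. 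The sections $\{2^n\}\times Y$ are now assumed to embed into $\ell_p$ with common $\rho_-$ and affine $\rho_+$, so each $\phi_n$ lands in a ball of radius $\le D\cdot 2^n$ — this is where the affineness of $\rho_+$ is needed, replacing the appeal to \cite{growthOfCocycles}*{Proposition 3.10}, because shrinking $\rho_+$ for free is exactly the Hilbert-space input we no longer have.

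The next step is the upper bound. Here nothing changes: the estimates in the $t/s\ge 2$ case and in the remaining case used only the triangle inequality in the norm and the control $\|\phi_{n+i}(y')-\phi_{n+i}(y)\|\le\rho_+(\dots)$ and $\|\phi_{n+i}(y)\|\le D\cdot 2^{n+i}$, all of which hold in any normed space. So the displayed chains of inequalities for $\|\Phi(t,y')-\Phi(s,y)\|$ carry over word for word, yielding the same $\rho_+'$ of the form $3\rho_+(4\,\cdot)+(\text{const})\cdot(\,\cdot)$, which is affine since $\rho_+$ is.

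The lower bound is the only place that requires care, since the original argument squared the norm and used that $\ell_2$-norm-squared splits as a sum of coordinate-squares. In $\ell_p$ one instead raises to the $p$-th power: $\|\Phi(t,y')-\Phi(s,y)\|^p - (t-s)^p = \sum_i\|\theta_t^i\phi_{n+i}(y')-\theta_s^i\phi_{n+i}(y)\|^p$ (using that the $\R$-coordinate contributes $(t-s)^p$ separately, or absorbing it with a harmless constant if the $\R$-summand is attached differently). The elementary estimate $\|w-\theta v\|\ge\tfrac12\|w-v\|$ for unit vectors $v,w$ and $\theta\in[0,1]$ is norm-only and survives, so each summand is bounded below by $\left(\max(\theta_t^i,\theta_s^i)\cdot\tfrac12\cdot\rho_-(d_{2^n}(y,y'))\right)^p$; summing over $i$ and using $\sum_i\max(\theta_t^i,\theta_s^i)\ge\sum_i\theta_t^i\ge$ a fixed positive constant (indeed $\ge (t-s)/2^{n+1}$, but more simply $\ge 1$ on the relevant coordinates) gives a lower bound of the form $c_p\cdot\rho_-\!\left(\tfrac12 d_s(y,y')\right)^p$, hence $\|\Phi(t,y')-\Phi(s,y)\|\ge\left((t-s)^p+c_p\rho_-(\tfrac12 d_s(y,y'))^p\right)^{1/p}$, which dominates an affine-type $\rho_-'$ of $d_\Gamma$ after the usual manipulations. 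Combining with the $t/s\ge 2$ case (where the bound $\|\Phi(t,y')-\Phi(s,y)\|\ge t-s\ge\tfrac18 d_\Gamma$ is again norm-only) completes the proof.

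The main obstacle I anticipate is purely bookkeeping: making sure that attaching the $\R$-coordinate and the $\bigoplus_n\H_n$ part into a single $\ell_p$-space does not spoil the separate treatment of the ``radial'' term $(t-s)$ and the ``spherical'' term, and that all the constants remain finite and independent of $n$. None of this is conceptually new beyond the observation already made in the text; the one genuine substitution is using the hypothesis ``$\rho_+$ affine'' in place of the Hilbert-space shrinking trick, and everything else is a mechanical replacement of squares by $p$-th powers.
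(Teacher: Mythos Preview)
Your proposal is correct and is essentially the paper's own argument spelled out in detail: the paper simply remarks that the only Hilbert-specific ingredient in the proof of Lemma~\ref{L:coneEmbeddability} was the appeal to \cite{growthOfCocycles} to force $\rho_+$ affine, and that once this is assumed the rest goes through for $\ell_p$ with squares replaced by $p$-th powers in the direct-sum norm. One tiny bookkeeping point: in the lower bound you pass from $\sum_i\max(\theta_t^i,\theta_s^i)\ge 1$ to a lower bound on $\sum_i\max(\theta_t^i,\theta_s^i)^p$; this is fine since there are only three terms in $[0,1]$, so the latter is at least $3^{1-p}$, but it is worth stating explicitly rather than conflating the two sums.
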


\begin{longrem}\label{R:automaticallyAffine} Recall that any coarse map from a quasi-geodesic metric space admits an affine function $\rho_+$. Hence, the additional assumption of Corollary \ref{C:p-embeddings} is automatically satisfied, whenever a cone (its sections) is quasi-geodesic. This holds in particular when $Y$ is a connected manifold.
\end{longrem}

As it is clear from the proof of Lemma \ref{L:coneEmbeddability}, if $\rho_\pm$ functions for the sections are affine, then also our estimates of $\rho_\pm$ functions for the cone are affine.

\begin{cor}\label{C:quasi-isometriesPreserved} Lemma \ref{L:coneEmbeddability} and Corollary \ref{C:p-embeddings} hold also in the case of quasi-isometric embeddings instead of general coarse embeddings.
\end{cor}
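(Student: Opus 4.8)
The plan is to re-examine the proof of Lemma~\ref{L:coneEmbeddability} (and its $\ell_p$-variant, Corollary~\ref{C:p-embeddings}) and observe that the construction of the global embedding $\Phi$ out of the sectional embeddings $\phi_n$ is essentially \emph{linear} in the estimates, so it respects affine control functions on both sides. The ``only if'' direction is again trivial: restricting a quasi-isometric embedding of the cone to any section $\{s\}\times Y$ gives a quasi-isometric embedding of that section with the same constants (one still needs a uniform additive constant, but the identification of sections by the identity on $Y$ together with the fact that the warped metric $d_s$ on $\{s\}\times Y$ dominates the larger-$s$ warped metrics keeps everything uniform). So the content is the ``if'' direction.

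For the ``if'' direction, suppose each section $\{2^n\}\times Y$ admits a quasi-isometric embedding $\phi_n$ with common affine functions $\rho_\pm$, say $\rho_-(r)=r/C-b$ and $\rho_+(r)=Cr+b$. First I would invoke \cite{growthOfCocycles}*{Proposition 3.10} exactly as in the original proof to rescale so that each $\phi_n$ lands in a ball of radius $D\cdot 2^n$; shrinking $\rho_+$ by a multiplicative constant keeps it affine, so this step is legitimate in the quasi-isometric category. Then I would run the \emph{same} formula $\Phi(s,y)=s+\theta_s\phi_n(y)+(1-\theta_s)\phi_{n+1}(y)$ and the \emph{same} case division ($t/s\geq 2$ versus $t/s<2$). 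In the first case the chain of inequalities displayed in the proof of Lemma~\ref{L:coneEmbeddability} already produces purely linear (indeed bi-Lipschitz up to additive constants) bounds $\tfrac{1}{8}d_\Gamma\leq\|\Phi(t,y')-\Phi(s,y)\|\leq(5D+1)d_\Gamma$, so nothing needs changing. In the second case, the upper bound in the original proof is $\|\Phi(t,y')-\Phi(s,y)\|\leq 3\rho_+(4d_\Gamma(\cdot))+(12D+1)d_\Gamma(\cdot)$, which is affine in $d_\Gamma$ precisely because $\rho_+$ is affine; the lower bound is $\|\Phi(t,y')-\Phi(s,y)\|^2-(t-s)^2\geq \tfrac{1}{8}\rho_-(\tfrac12 d_s(y,y'))^2$, and combined with $(t-s)^2$ on the left, after taking square roots one extracts a lower bound of the form $\mathrm{const}\cdot\rho_-(\mathrm{const}\cdot d_\Gamma(\cdot))$ up to additive constants, again affine when $\rho_-$ is. Finally, for $\ell_p$ in place of Hilbert space one repeats Corollary~\ref{C:p-embeddings}: the only structural use of the Hilbert norm was the Pythagorean splitting in the lower-bound computation, which in $\ell_p$ is replaced by the analogous $\ell_p$-sum inequality, at the cost of a $p$-dependent constant — harmless since we only track affineness.

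The step I expect to require the most care is the bookkeeping of the additive constants $b$ in $\rho_\pm$ and in the diameter-rescaling: one must check that the ``arbitrarily small $\rho_+$'' modification from \cite{growthOfCocycles} does not reintroduce a growing additive term across the infinitely many sections, and that the single additive constant coming from quasi-geodesicity of the cone (cf.\ Remark~\ref{R:automaticallyAffine}) absorbs the contributions of the gluing weights $\theta$. Once one fixes the normalisation so that all $\phi_n$ share the same affine $\rho_\pm$ and the same $D$, every estimate in the two displayed multi-line computations is already affine in $d_\Gamma$, so the conclusion — that $\Phi$ is a quasi-isometric embedding of $\cO_\Gamma Y$ — follows by assembling those two bounds, and the $\ell_p$ case is identical modulo the constant in the $\ell_p$-sum inequality.
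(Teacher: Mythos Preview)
Your proposal is correct and follows the paper's approach, which consists solely of the one-sentence observation (stated just before the corollary) that the estimates produced in the proof of Lemma~\ref{L:coneEmbeddability} are affine whenever $\rho_\pm$ are. One simplification worth noting: in the quasi-isometric case you need not invoke \cite{growthOfCocycles} at all, since an affine $\rho_+$ directly bounds $\diam\phi_n(\{2^n\}\times Y)$ by $\rho_+(2^n\cdot\diam Y)\leq D\cdot 2^n$ with $D$ independent of $n$, which dispels the worry you raise in your final paragraph about that rescaling step.
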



\section{Property A}\label{S:A}

By $\ell_1(X)_{1,+}$ we will denote the set of non-negative functions on $X$ summing to $1$. Let us recall the Hulanicki--Reiter condition, which is equivalent to property $A$ for bounded geometry metric spaces \cite{cones}.

\begin{defi}
A metric space $X$
satisfies the Hulanicki--Reiter condition
if for all $R<\infty$, $\eps>0$ there is $S<\infty$ and a map $a\colon X\to \ell_1(X)_{1,+}$ such that $\|a(x)-a(x')\|\leq \eps$ whenever $d(x,x')\leq R$ and the support of $a(x)$ lies in the $S$-ball about $x$.
\end{defi}

The constant $S=S(R,\eps)$ from the above definition will be called the \emph{localisation constant}.

\begin{prop}\label{P:propA} The warped cone $\cO_\Gamma Y$ satisfies the Hulanicki--Reiter condition 
 if and only if all of its sections satisfy the Hulanicki--Reiter condition 
with uniform localisation constants.
\end{prop}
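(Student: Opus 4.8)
The plan is to mimic the structure of the proof of Lemma \ref{L:coneEmbeddability}: the ``only if'' direction is trivial (restrict $a$ to a section and note that distances in the section dominate the warped distance, so the same $S$ works), and for the ``if'' direction I will patch together the Hulanicki--Reiter functions on a geometrically spaced family of sections $\{2^n\}\times Y$. Given $R$ and $\eps$, for each $n$ let $a_n\colon \{2^n\}\times Y \to \ell_1(\{2^n\}\times Y)_{1,+}$ be the map witnessing the Hulanicki--Reiter condition for the section $\{2^n\}\times Y$, with the \emph{uniform} localisation constant $S=S(R',\eps')$ for suitably inflated parameters $R'$, $\eps'$ depending only on $R$, $\eps$ (the inflation is needed because the warped distance on the cone between two points at the same level $s\in[2^n,2^{n+1}]$ is comparable, up to the factor $\min(t,s)/2^n \le 2$, to $d_{2^n}$ on the section). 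The target function $a(s,y)$ will be a convex combination $\theta_s\cdot \widetilde a_n(y) + (1-\theta_s)\cdot \widetilde a_{n+1}(y)$ for $s\in[2^n,2^{n+1}]$, where $\widetilde a_n$ denotes $a_n$ transported to a common copy of $\ell_1(\cO_\Gamma Y)$ by shifting its support radially onto the level $s$ (a point in the $S$-ball around $(2^n,y)$ in the section moves to a point within warped distance $O(S)$ of $(s,y)$, since a radial shift by $|s-2^n|\le 2^n$ costs exactly $|s-2^n|$, which is bounded by the level and hence by a constant multiple of the ``size'' we are working at — more precisely I will just bound the new support radius by $S + (t-s)$ type terms and absorb them).

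The key estimates are then: (1) \emph{support localisation.} If $d_\Gamma((s,y),(t,y'))\le R$ then either $s,t$ lie in adjacent dyadic blocks or $|t-s|$ is bounded by $R$; in the first subcase the support of $a(s,y)$ lies within warped distance $S + 2R$ (say) of $(s,y)$, using that each $\widetilde a_n$-support has radius $\le S$ measured along its level, that radial translation within a level is an isometry for the radial coordinate, and that passing between levels $2^n$ and $s$ adds at most $2^n \le$ (something controlled) — I will phrase this so that the localisation constant for the cone is an explicit function of $S$ and $R$ only. (2) \emph{closeness.} If $d_\Gamma((s,y),(t,y'))\le R$, I need $\|a(s,y)-a(t,y')\|\le \eps$. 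Split $\|a(s,y)-a(t,y')\|$ via the triangle inequality into a ``change of $\theta$'' term $|\theta_s-\theta_t| + |(1-\theta_s)-(1-\theta_t)|$ bounded by $2|s-t|/2^n = O(R/2^n)$ — which is \emph{not} automatically small, so as in Lemma \ref{L:coneEmbeddability} I must treat separately the regime $2^n$ small (i.e. bounded), where there are only finitely many problematic sections and I can just... hmm. Actually here is the point: when $2^n \le R$ the radial coordinate is bounded, so the whole region $[1,2R]\times Y$ has bounded radial extent; but $Y$ need not be bounded in $d_\Gamma$-metric for small $s$. Let me instead note $d_\Gamma \ge d_1$ on every section and $d_1 \le d_\Gamma$ there too up to the constant $\le 2$, so the region $\{s\le 2R\}$ is coarsely a single section $\{1\}\times Y$ times a bounded interval; apply the Hulanicki--Reiter condition for that one section directly and combine — the standard way to ``glue two partitions of unity'' is to take, for $s$ in the overlap, the one minimising support, or simply to build $a$ on $[1,2R]\times Y$ and on $[R,\infty)\times Y$ separately and interpolate once in the overlap $[R,2R]$, where $\theta$ varies by $O(R)/R = O(1)$...

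So the cleaner route, which I will follow: choose the dyadic scale coarse enough (blocks $[C^n, C^{n+1}]$ with $C = C(R,\eps)$ large, not $C=2$) so that within a single block the relative change $|s-t|/C^n \le R/C^n$ of the interpolation parameter is $\le \eps/4$ once $C^n \ge 4R/\eps$; and for the finitely many initial blocks with $C^n < 4R/\eps$, which together cover $[1, 4R/\eps]\times Y$, use the Hulanicki--Reiter condition for the single section $\{1\}\times Y$ (valid because on this bounded-radius slab the warped cone metric and $d_1$ on $Y$ agree up to additive and multiplicative constants depending only on $R,\eps$), interpolating across the one seam at $s \approx 4R/\eps$ exactly as above. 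Then the ``change of $\theta$'' term is $\le \eps/2$ everywhere, and the ``change of $a_n$'' term, $\theta_s\|\widetilde a_n(y)-\widetilde a_n(y')\|$ etc., is $\le \eps/2$ provided $d_{C^{n}}(y,y')$ (or $d_{C^{n+1}}$) is $\le$ the parameter $R'$ for which $a_n$ was chosen, which holds since $d_{C^{n}}(y,y') \le 2\,d_\Gamma((s,y),(t,y')) + |s-t| \le (2+1)R =: R'$. (3) Finally, I should double-check $a(s,y)$ lands in $\ell_1(\cO_\Gamma Y)_{1,+}$: it is a convex combination of probability vectors with finite support, hence one itself.

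The main obstacle is exactly the interpolation-parameter term: unlike in Lemma \ref{L:coneEmbeddability} where a factor like $(t-s)/2^n$ got multiplied into a Hilbert-space norm and then absorbed into the \emph{large-distance} upper bound, here it directly enters the $\eps$-smallness requirement and cannot be absorbed — it forces the choice of dyadic base $C$ (equivalently, the ``interpolation length scale'') to depend on $R$ and $\eps$, and forces the separate treatment of the bounded-radius part of the cone. Everything else is bookkeeping with the two elementary facts $d_1\le d_s\le d_\Gamma \le \ldots$ on sections and $d_s(y,y') \le 2\,d_\Gamma((s,y),(t,y')) + |s-t|$, which follow from Lemma \ref{L:metricWarpedCone} and the definitions. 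I expect the final localisation constant for the cone to come out as roughly $S(R',\eps/2) + O(R/\eps)$ with $R' = 3R$, $\eps' = \eps/2$, which is the claimed uniformity statement.
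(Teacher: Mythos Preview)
Your ``only if'' direction is too quick and in fact contains the only nontrivial estimate of that direction. Restricting the domain of $a$ to a section is fine, but the Hulanicki--Reiter map for the section must take values in $\ell_1(\{s\}\times Y)$, not $\ell_1(\cO_\Gamma Y)$; you have to \emph{project} by summing over the radial coordinate, say $b(s,y)(s,y') = \sum_t a(s,y)(t,y')$. The $\eps$-closeness survives projection, but localisation does not come for free: a point $(t,y')\in\supp a(s,y)$ only gives $d_\Gamma((s,y),(t,y'))\le S$, and you must convert this to a bound on $d_s(y,y')$. The paper does this via concavity of $t\mapsto d_t(y,y')$, obtaining $d_\Gamma((s,y),(t,y'))\ge \tfrac12 d_s(y,y')$ and hence localisation constant $2S$. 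Your sentence ``distances in the section dominate the warped distance, so the same $S$ works'' is simply wrong: for two points at the same level the distances are \emph{equal} by Lemma~\ref{L:metricWarpedCone}, and for points at different levels it is precisely the opposite inequality you need.

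For the ``if'' direction your scheme is workable but needlessly heavy compared to the paper. Rather than geometric spacing with a base $C=C(R,\eps)$, linear interpolation between two sections, and a separate treatment of the slab $[1,4R/\eps]\times Y$ (plus a seam to manage), the paper averages over $M=\lceil R/\eps\rceil$ consecutive \emph{integer} sections:
\[
b(r,y) = M^{-1}\sum_{m=\lfloor r\rfloor-M+1}^{\lfloor r\rfloor} c_m(y).
\]
Moving $r$ by $k\le R$ replaces at most $2k+2$ of the $M$ summands, so $\|b(r+k,y)-b(r,y)\|\le (2k+2)/M\le 6\eps$ automatically; the moving average absorbs exactly the ``change of interpolation parameter'' term that forced you to enlarge the base $C$. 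The small-$r$ region is then handled by a single Lipschitz cutoff $\phi$ interpolating to one Dirac mass $\delta_{(1,y_0)}$, with no need for a coarse-equivalence argument on the slab. Your route gains nothing in return for the extra bookkeeping; the moving-average trick is the cleaner idea here.
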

\begin{proof} \emph{``Only if'' part}. Let $R<\infty$, $\eps>0$ and $a\colon \cO_\Gamma Y \to \ell_1(\cO_\Gamma Y)_{1,+}$ be the corresponding Hulanicki--Reiter function 
such that the support of $a(x)$ lies within the $S_{\cO_\Gamma Y}$-ball about $x$.

For $(s,y)\in \cO_\Gamma Y$ let $b(s,y)\in\ell_1(\{s\}\times Y)_{1,+}$ be defined by the formula 
$$b(s,y)(s,y') = \sum_{t\geq 0} a(s,y)(t,y').$$
Clearly $b$ satisfies $\|b(s,y)-b(s,y')\|_1\leq \|a(s,y)-a(s,y')\|_1 \leq \eps$ provided that $d_\Gamma((s,y),(s,y'))\leq R$. Moreover, by concavity of $t\mapsto d_t(y,y')$, we have $d_{t}\geq \frac{t}{s}d_s$ for $t\leq s$ and we conclude that
$$d_\Gamma((s,y),(t,y'))=s-t+d_{t}(y,y') \geq \frac{s-t}{s}\cdot s + \frac{t}{s}\cdot d_s(y,y') \geq {1 \over 2} \cdot d_s(y,y')$$
and thus the support of $b(s,y)$ is contained in the $(2\cdot S_{\cO_\Gamma Y})$-ball about $(s,y)$ in $\{s\}\times Y$ with respect to the warped metric $d_s$ (as the last inequality holds trivially for $t\geq s$).

\emph{``If'' part}. Now, suppose that for $\eps>0$, $R>1$, function $c_n\colon \{n\}\times Y \to \ell_1(\{n\}\times Y)_{1,+}$ is the Hulanicki--Reiter map and $S$ is the corresponding constant. Let $M = \ceil{\eps^{-1}R}$. For $r\geq M$ define $$b(r,y) = \sum_{m=\floor{r}-M+1}^{\floor{r}}M^{-1}c(m,y).$$

To handle small $r$ we will mimic the technique from \cite{cones}*{Lemma 2.5}. Let $\phi\colon[1,\infty)\to[0,1]$ be $M^{-1}$-Lipschitz, equal to $1$ on $[1,M]$ and zero for arguments larger than $2M$. We choose any $y_0\in Y$ and define
$$a(r,y)=\phi(r)\delta_{(1,y_0)} + (1-\phi(r)) b(r,y).$$

First note that $\supp b(r,y)$ lies within $(M+S)$-ball about $(r,y)$ in $\cO_\Gamma Y$. Thus the support of $a(r,y)$ lies within the ball of radius $$\max(M+S,\ 2M+\diam(Y_1,d_1))$$ about $(r,y)$.

It is left to check that $a$ varies by at most $7\eps$ if arguments vary by at most $R$. By the form of the warped metric $d_\Gamma$ it is enough to consider the following two cases separately. Assuming that $d_r(y,y')\leq R$ we have:
\begin{multline*}
\|a(r,y)-a(r,y') \| = (1-\phi(r))\|b(r,y)-b(r,y')\|\\
 \leq \sum_{m=\floor{r}-M+1}^{\floor{r}}M^{-1}\|c(m,y)-c(m,y')\| \leq \sum_{m=\floor{r}-M+1}^{\floor{r}}M^{-1}\eps = \eps
\end{multline*}
and if $k\leq R$ one obtains:
\begin{multline*}
\|a(r+k,y)-a(r,y) \| \leq \|\phi(r+k)\delta_{(1,y_0)}-\phi(r)\delta_{(1,y_0)}\| \\
+ \|(1-\phi(r+k))b(r+k,y) - (1-\phi(r))b(r+k,y)\| \\
+ \|(1-\phi(r))b(r+k,y)-(1-\phi(r))b(r,y)\| \leq \frac{k}{M} + \frac{k}{M} + \frac{2k+2}{M} \leq 6\eps.\qedhere
\end{multline*}
\end{proof}


\section{Coarse properties of the cone imply equivariant properties of the group}\label{S:metricVsEquivariant}

In this section we will show how to strengthen \cite{cones}*{Proposition 4.1, 4.4} to the following. 

\begin{prop}\label{P:AtoAmenability} Let $\mu$ be a $\Gamma$-invariant measure on $Y$ and assume that there exists a subset $P\subseteq Y$ of positive measure on which the action of $\Gamma$ is free.
\begin{enumerate}
\item\label{AtoAmenability:cond1} If $\cO_\Gamma Y$ has property A, then $\Gamma$ is amenable.
\item\label{AtoAmenability:cond2} If $\cO_\Gamma Y$ embeds coarsely into a Hilbert space, then $\Gamma$ has the Haagerup property.
\end{enumerate}
\end{prop}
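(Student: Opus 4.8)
The plan is to reduce the statement about the warped cone to a statement about the group $\Gamma$ acting on the probability space $(P,\mu/\mu(P))$, and then invoke the characterisations of amenability (resp. the Haagerup property) in terms of such actions. Concretely, amenability of $\Gamma$ is equivalent to the existence, for every finite $F\subseteq\Gamma$ and every $\eps>0$, of an almost invariant unit vector in $\ell_1(\Gamma)$; and the Haagerup property is equivalent to the existence of a proper affine isometric action on a Hilbert space, or equivalently of a sequence of normalised positive-definite kernels vanishing at infinity. So the first step is: given a Hulanicki--Reiter function (resp. a coarse embedding into a Hilbert space) for $\cO_\Gamma Y$, produce from it, for suitable scales $s$, approximately equivariant $\ell_1$-valued maps (resp. maps to Hilbert space with controlled compression/expansion) on the orbit/coset part of the section $\{s\}\times Y$.

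The key mechanism is the one already developed in the paper: by Remark \ref{r:metricConvergence} and the discussion of the warped metric on an orbit in Section \ref{S:expanders}, for any fixed finite subset $F$ of a single $\Gamma$-orbit in $P$ (which, by freeness on $P$, is isomorphic as a $\Gamma$-set to $\Gamma$ itself), the warped metric $d_s$ restricted to $F$ agrees with the restriction of the word metric on $\Gamma$ once $s$ is large enough. Thus the sections $\{s\}\times Y$ contain, for $s$ large, arbitrarily large "copies" of balls in $\Gamma$, isometrically. Applying Proposition \ref{P:propA} (for part (1)) or Lemma \ref{L:coneEmbeddability} (for part (2)) we get that these sections satisfy the Hulanicki--Reiter condition with uniform localisation constants (resp. embed coarsely into Hilbert space with uniform $\rho_\pm$), hence so do these embedded balls of $\Gamma$, uniformly in the radius. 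The next step is a diagonal/averaging argument over the measure $\mu$ on $P$: rather than working with a single orbit, one integrates the section-level data against $\mu\!\restriction_P$ to manufacture genuinely $\Gamma$-equivariant (not just orbitwise) objects — an almost invariant mean for (1), and a conditionally negative definite function on $\Gamma$ of controlled growth, yielding a proper such function in the limit, for (2). This is exactly the strategy of \cite{cones}*{Propositions 4.1, 4.4}; our improvement is that we only need property A / coarse embeddability of the \emph{cone}, because Proposition \ref{P:propA} and Lemma \ref{L:coneEmbeddability} let us descend to the sections for free, whereas \cite{cones} worked with hypotheses phrased on a single level.

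For part (1): from a Hulanicki--Reiter function $a$ on the section $\{s\}\times Y$ with uniform localisation constant $S=S(R,\eps)$, restrict to a large ball $B$ in an embedded copy of $\Gamma$, transport $a$ along the identification $B\cong$ (ball in $\Gamma$), push forward along $P\to$ point via $\mu$, and extract from the family (indexed by ball radius $\to\infty$) a finitely additive invariant mean on $\Gamma$; almost invariance on each finite $F$ follows because $F$ eventually sits inside $B$ and $a$ moves by at most $\eps$ across $F$. For part (2): from coarse embeddings $\phi_s\colon\{s\}\times Y\to\H_s$ with uniform $\rho_\pm$, restrict to the embedded balls of $\Gamma$, recentre and rescale, and use the standard passage from a coarse embedding of all balls of $\Gamma$ (uniformly) to a coarse embedding of $\Gamma$ itself, which for groups is equivalent to the Haagerup property; the measure-theoretic averaging against $\mu\!\restriction_P$ again upgrades orbitwise data to an honest function on $\Gamma$. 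I expect the main obstacle to be the bookkeeping in this last averaging step — ensuring that the integration over $P$ of the section-level maps produces something equivariant with the compression/expansion (resp. the $\eps$-almost-invariance) bounds surviving uniformly, and that the limit as the ball radius tends to infinity exists in the appropriate (weak-$*$ or pointwise) sense. The geometric input — that sections contain isometric copies of arbitrarily large $\Gamma$-balls — is already in hand from Section \ref{S:expanders} together with the freeness of the action on $P$, so the novelty is purely in assembling these pieces; everything else is a transcription of \cite{cones}*{\S4} with "cone" in place of the ad hoc level-wise hypothesis used there.
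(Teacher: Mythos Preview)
Your approach diverges from the paper's and contains a genuine conceptual gap. The paper does \emph{not} descend to sections via Proposition~\ref{P:propA} and Lemma~\ref{L:coneEmbeddability}; it works directly with positive-type kernels $k_n$ on the whole cone (obtained from property~A via Fact~\ref{F:positiveKernels}) for part~(1), and with the negative-type kernel $k(x,x')=\|f(x)-f(x')\|^2$ coming from the coarse embedding for part~(2). The entire argument is the single formula
\[
h^t(\gamma)=\int_P k\big((t,y),(t,\gamma y)\big)\,d\mu(y),
\]
together with the change of variables $z=\gamma' y$ (using $\Gamma$-invariance of $\mu$ and of $P$), which shows that $h^t$ is a positive-type (resp.\ negative-type) \emph{function} on $\Gamma$. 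One then passes to a limit along $t_j\to\infty$, using freeness on $P$ and Remark~\ref{r:metricConvergence} to see that the limit is compactly supported and tends to $1$ (resp.\ is proper). Kernels, not Hulanicki--Reiter $\ell_1$-maps, are the right currency here precisely because positive/negative definiteness is preserved under integration, so the averaging step is one line.

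The gap in your proposal is the sentence ``a coarse embedding of $\Gamma$ itself, which for groups is equivalent to the Haagerup property''. This is false: $SL_3(\Z)$ is exact, hence has property~A and coarsely embeds into Hilbert space, yet it has property~(T) and so fails Haagerup. The same issue bites part~(1): knowing that arbitrarily large balls of $\Gamma$ sit isometrically inside a property-A space only yields property~A (exactness) of $\Gamma$, not amenability. The whole point of the proposition is that the \emph{invariant measure} upgrades the coarse (non-equivariant) hypothesis on the cone to the equivariant conclusion on $\Gamma$, and your description of that step (``push forward along $P\to$ point via $\mu$'', ``upgrades orbitwise data to an honest function on $\Gamma$'') does not explain how positive/negative-definiteness on $\Gamma$ emerges. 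With $\ell_1$-valued Hulanicki--Reiter maps the support of $a(s,y)$ need not lie in the orbit of $y$, so there is no obvious way to read off an element of $\ell_1(\Gamma)$; passing to kernels first is exactly what dissolves this difficulty, and once you do that you have recovered the paper's proof.
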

In the original work, $Y$ was required to be a compact Lie group containing $\Gamma$ as a dense subgroup.

We will need the following consequence of property A, which is equivalent to property A for bounded geometry metric spaces, see \cites{cones,Roe}. Recall that a controlled set in the product $X\times X$ is a set lying in a bounded distance from the diagonal.

\begin{fact}\label{F:positiveKernels} If a metric space $X$ has property A, then there is a sequence $(k_n)$ of continuous positive-type kernels such that:
\begin{enumerate}
\item $|k_n(x,x')|\leq 1$;
\item each $k_n$ has controlled support;
\item $k_n(x,x')\to 1$ as $n\to\infty$, uniformly on each controlled set.
\end{enumerate}
\end{fact}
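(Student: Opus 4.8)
The plan is to realise each member of the sequence as the Gram kernel $k(x,x')=\langle \xi_x,\xi_{x'}\rangle$ of a suitable \emph{continuous} family of unit vectors $\xi_x$ in a Hilbert space: positivity of type is then automatic, boundedness by $1$ is Cauchy--Schwarz, and the support and convergence conditions become quantitative statements about the family $(\xi_x)$. Thus the whole task reduces to producing, for parameters $R$ and $\eps$, a continuous map $x\mapsto \xi_x$ into the unit sphere of $\ell_2(X)$ with controlled support and with $\|\xi_x-\xi_{x'}\|$ small whenever $d(x,x')\leq R$.

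First I would pass from property A to an $\ell_2$-formulation. Since $X$ has bounded geometry, property A is equivalent to the Hulanicki--Reiter condition, so for each $R,\eps$ I obtain $S<\infty$ and $a\colon X\to \ell_1(X)_{1,+}$ with $\|a(x)-a(x')\|_1\leq \eps$ when $d(x,x')\leq R$ and $\supp a(x)\subseteq B(x,S)$. Taking pointwise square roots $\sqrt{a(x)}\in \ell_2(X)$ gives unit vectors (as $\sum_z a(x)(z)=1$) with the same support, and the pointwise estimate $(\sqrt{f}-\sqrt{g})^2=|\sqrt{f}-\sqrt{g}|^2\leq |\sqrt{f}-\sqrt{g}|(\sqrt{f}+\sqrt{g})=|f-g|$, summed over $z$, yields $\|\sqrt{a(x)}-\sqrt{a(x')}\|_2^2\leq \|a(x)-a(x')\|_1\leq \eps$ on the controlled set $d(x,x')\leq R$.

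Next comes the one genuinely delicate point: making the family continuous, since $x\mapsto \sqrt{a(x)}$ need not be. I would fix a uniformly discrete, coarsely dense net $Z\subseteq X$ together with a locally finite continuous partition of unity $(\psi_z)_{z\in Z}$ subordinate to the cover $\big(B(z,C)\big)_{z\in Z}$, and set $\eta_x=\sum_{z\in Z}\psi_z(x)\sqrt{a(z)}$. This is a continuous map into the unit ball of $\ell_2(X)$, supported within $B(x,S+C)$. Applying the previous step with a Hulanicki--Reiter parameter exceeding $R+2C$ forces all net vectors $\sqrt{a(z)}$ actually entering the combinations $\eta_x,\eta_{x'}$ (for $d(x,x')\leq R$) to be mutually within $\sqrt{\eps}$; in particular a single $\eta_x$ satisfies $\|\eta_x\|\geq 1-\sqrt{\eps}$ and $\|\eta_x-\eta_{x'}\|\leq 2\sqrt{\eps}$. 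Normalising $\xi_x=\eta_x/\|\eta_x\|$ then produces a continuous family of genuine unit vectors, still supported in $B(x,S+C)$, and since normalisation is Lipschitz off the ball of radius $1-\sqrt{\eps}$ one gets $\|\xi_x-\xi_{x'}\|\leq C'\sqrt{\eps}$ on $d(x,x')\leq R$ for a constant $C'$.

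Finally I would assemble the sequence. Setting $k(x,x')=\langle \xi_x,\xi_{x'}\rangle$, positivity of type holds because $\sum_{i,j}c_i\overline{c_j}\,k(x_i,x_j)=\big\|\sum_i c_i\xi_{x_i}\big\|^2\geq 0$; continuity of $k$ follows from that of $x\mapsto\xi_x$; Cauchy--Schwarz gives $|k|\leq 1$; and $k(x,x')\neq 0$ forces the supports of $\xi_x,\xi_{x'}$ to meet, whence $d(x,x')\leq 2(S+C)$, so $k$ has controlled support. For the convergence, the identity $k(x,x')=1-\tfrac12\|\xi_x-\xi_{x'}\|^2$ combined with the bound above gives $k(x,x')\geq 1-\tfrac12 C'^2\eps$ on $d(x,x')\leq R$. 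Letting $R=R_n\to\infty$ and $\eps=\eps_n\to 0$, and writing $k_n$ for the resulting kernel, yields the sequence: on any fixed controlled set $\{d\leq R\}$ one has $d\leq R_n$ for all large $n$, so $k_n\to 1$ uniformly there. The main obstacle is precisely the continuity step: arranging joint continuity of $k_n$ while simultaneously keeping the support controlled and retaining the lower bound $\|\eta_x\|\geq 1-\sqrt{\eps}$ that legitimises the normalisation; positivity of type itself costs nothing, being built into the Gram construction.
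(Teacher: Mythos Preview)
The paper does not actually prove this statement: it is presented as a known fact with a citation to \cite{cones} and \cite{Roe}, so there is no in-paper argument to compare against. Your proposal is the standard proof one finds in those references---realise property~A via the Hulanicki--Reiter condition, push to $\ell_2$ by the square-root (Mazur) map, average against a locally finite partition of unity to gain continuity, normalise, and take the Gram kernel---and it is correct as written.

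Two small remarks. First, you silently assume bounded geometry when invoking the Hulanicki--Reiter equivalence and when asserting that the cover $\big(B(z,C)\big)_{z\in Z}$ is locally finite (needed for the partition of unity and for $\eta_x$ to be a finite sum); this is harmless in the paper's context, since the fact is only applied to warped cones of bounded geometry, and the paper itself frames the equivalence that way. Second, you might note explicitly that the Lipschitz constant for normalisation on $\{\|v\|\geq 1-\sqrt{\eps}\}$ is at most $2/(1-\sqrt{\eps})$, so that your constant $C'$ is genuinely independent of $n$ once $\eps_n\leq 1/4$, say; otherwise the final uniform-convergence claim is slightly underspecified.
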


The proof of Proposition \ref{P:AtoAmenability} does not construct or require the kernels to be $\Gamma$-invariant and relies only on an invariant measure, which enables extending the result beyond the case of subgroup actions.

\begin{proof}[Proof of Proposition \ref{P:AtoAmenability}]
Without loss of generality $\mu(P)=1$.

Let us start with assertion \eqref{AtoAmenability:cond1}. Let $(k_n)$ be a sequence of positive-type kernels on $\cO_\Gamma Y$ satisfying the conditions of Fact \ref{F:positiveKernels}. We define:
\begin{equation}\label{AtoAmenability:formula}
h_n^t(\gamma) = \int_P k_n((t,y),(t,\gamma y))\ d\mu(y).
\end{equation}
Note that $h_n^t$ is a positive-type kernel on $\Gamma$:

\begin{multline*}
\sum_{\gamma,\gamma'} \lambda_{\gamma}\lambda_{\gamma'} h_n^t(\gamma^{-1}\gamma') 
= \sum_{\gamma,\gamma'} \lambda_{\gamma}\lambda_{\gamma'} \int_P k_n\Big((t,y),(t,\gamma^{-1}\gamma' y)\Big)\ d\mu(y) \\
= \int_P \sum_{\gamma,\gamma'} \lambda_{\gamma}\lambda_{\gamma'} k_n\Big((t,(\gamma')^{-1} z),(t,\gamma^{-1} z)\Big)\ d\mu(z) \geq 0
\end{multline*}
-- in the second equality we change the variables with $z=\gamma' y$ using the $\Gamma$-invariance of $\mu$ and $P$ and, further, change the order of summation and integration.

Since $h_n^t$ are bounded and with a countable domain, one can choose a sequence $(t_j)_{j=1}^{\infty}$ converging to infinity such that $h_n^{t_j}$ tends to some $h_n$ as $j$ goes to infinity. Clearly $h_n$ is a positive-type function on $\Gamma$.

Kernel $k_n$ has controlled support, thus there is some $N<\infty$ such that $d_t(y,y')>N$ implies $k_n((t,y), (t,y'))=0$.
Since the action is free, we have $\lim_{t\to \infty} d_t(y, \gamma y) = |\gamma|$ (Remark \ref{r:metricConvergence}). Hence, whenever $|\gamma|>N$, we obtain by the Lebesgue dominated convergence theorem:
$$h_n(\gamma) = \lim_j \int_P k_n((t_j,y),(t_j,\gamma y))\ d\mu(y) = 0,$$
so $h_n$ has a compact support.

Similarly, one can check that $h_n(\gamma)\to 1$ for every $\gamma$. Indeed, functions $k_n$ converge to $1$ uniformly on controlled sets, thus for each $\eps>0$, there is $N=N(\eps,|\gamma|)$ such that $k_n((t,y),(t,y'))\geq 1-\eps$ if $d_t(y,y')\leq |\gamma|$ and $n\geq N$. Clearly, $d_t(y,\gamma y)\leq |\gamma|$, so $1\geq h_n^t(\gamma)\geq 1-\eps$ whenever $n\geq N$, and, hence, the same is true for $h_n$.

Summarising, $\Gamma$ admits a sequence of positive-type functions with compact support tending pointwise to $1$ and this is one of the characterisations of amenability.

For assertion \eqref{AtoAmenability:cond2}, let $\map k {(\cO_\Gamma Y)^2} [0,\infty)$ be a continuous negative-type kernel establishing the coarse embeddability. That is, given a continuous coarse embedding $\map f {\cO_\Gamma Y} \H$, one can define such a kernel by the formula $k(x,x') = \|f(x)-f(x')\|^2$ (see \cite{Roe}*{Section 11.2}). For some nondecreasing and unbounded maps $\rho_-$, $\rho_+$, we have:
$$\rho_-\circ d_\Gamma(x,x') \leq k(x,x') \leq \rho_+\circ d_\Gamma(x,x').$$
By formula \eqref{AtoAmenability:formula} with $k$ instead of $k_n$, one can define a negative-type function $h^t$ satisfying $0\leq h^t(\gamma) \leq \rho_+(|\gamma|)$. Again, by choosing an appropriate sequence $(t_j)$ we obtain kernel $h=\lim_j h^{t_j}$, which is additionally proper as $\rho_-(|\gamma|)\leq h(\gamma)$, which certifies the desired Haagerup property.
\end{proof}

A reader familiar with the notion of asymptotic embedding \cite{randomGraphs} (asymptotically conditionally negative definite kernel of \cite{BGW16}), can immediately deduce from the proof that the assumptions of Proposition \ref{P:AtoAmenability} \eqref{AtoAmenability:cond2} can be relaxed.

\begin{cor} Under the assumptions of Proposition \ref{P:AtoAmenability}, if  $\cO_\Gamma Y$ admits an asymptotic embedding into a Hilbert space, then $\Gamma$ has the Haagerup property.
\end{cor}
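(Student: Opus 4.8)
The plan is to revisit the proof of Proposition~\ref{P:AtoAmenability}\eqref{AtoAmenability:cond2} and observe that nowhere did we use the kernel $k(x,x') = \|f(x)-f(x')\|^2$ coming from a genuine coarse embedding; what was actually used was a family of negative-type kernels whose behaviour stabilises as $t\to\infty$. Concretely, I would recall the definition of an asymptotic embedding from \cite{randomGraphs}: a sequence of maps $f_t\colon \{t\}\times Y \to \H$ (equivalently a single map on the cone whose restrictions to sections behave uniformly in the limit), together with control functions $\rho_-,\rho_+$ that are unbounded and nondecreasing, such that $\rho_-\circ d_t(y,y') \le \|f_t(y)-f_t(y')\|^2 \le \rho_+\circ d_t(y,y')$ holds asymptotically, i.e.\ up to an error that vanishes as $t\to\infty$ uniformly on controlled sets. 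The key point is that this is precisely the hypothesis needed to run the averaging-and-limiting argument already given.

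The steps, in order, would be: first, from the asymptotic embedding extract for each $t$ the negative-type kernel $k_t(y,y') = \|f_t(y)-f_t(y')\|^2$ on the section $\{t\}\times Y$, and set $h^t(\gamma) = \int_P k_t(y,\gamma y)\,d\mu(y)$ as in formula~\eqref{AtoAmenability:formula}; the same change-of-variables computation using $\Gamma$-invariance of $\mu$ and $P$ shows $h^t$ is negative-type on $\Gamma$. Second, use the upper control to get $0 \le h^t(\gamma) \le \rho_+(d_t(y,y')$-bound$)$, which together with $d_t(y,\gamma y)\le|\gamma|$ (and the asymptotic slack, which is bounded) gives a uniform-in-$t$ bound $h^t(\gamma) \le C\rho_+(|\gamma|) + C'$. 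Third, since each $h^t$ is a function on the countable group $\Gamma$, by a diagonal argument pick $t_j\to\infty$ with $h^{t_j}\to h$ pointwise; $h$ is negative-type as a pointwise limit of negative-type kernels. Fourth, for properness use Remark~\ref{r:metricConvergence}: on $P$ the action is free so $d_t(y,\gamma y)\to|\gamma|$, hence by the lower control $h(\gamma) \ge \liminf_j \rho_-(d_{t_j}(y,\gamma y) - \text{slack}) \ge \rho_-(|\gamma| - \text{slack})$, which is unbounded in $|\gamma|$; this is exactly properness of $h$, certifying the Haagerup property.

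The main obstacle — really the only subtlety — is bookkeeping the asymptotic slack: in an asymptotic embedding the two-sided estimate is not exact on each section but only holds with an error term $\eta(t)\to 0$ (or a controlled additive constant, depending on the precise formulation one adopts), so one must check that this error does not destroy either the uniform upper bound needed for the diagonal extraction or the divergence of the lower bound needed for properness. Since $\eta(t)\to 0$, in the limit $j\to\infty$ the slack disappears entirely, so both conclusions survive verbatim; one just has to state the estimates with the error term present and note it vanishes. I would therefore present the corollary's proof as essentially a remark: ``Repeat the proof of Proposition~\ref{P:AtoAmenability}\eqref{AtoAmenability:cond2}, replacing the kernel $k$ by the asymptotically negative-type kernel from the asymptotic embedding; the only change is that the inequalities $\rho_-\circ d_\Gamma \le k \le \rho_+\circ d_\Gamma$ now hold up to an error vanishing as $t\to\infty$, which is absorbed when passing to the limit $h = \lim_j h^{t_j}$.''
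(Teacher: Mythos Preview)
Your proposal is correct and matches the paper's approach exactly: the paper gives no separate proof of this corollary but simply remarks that a reader familiar with asymptotic embeddings can immediately deduce it from the proof of Proposition~\ref{P:AtoAmenability}\eqref{AtoAmenability:cond2}, which is precisely what you unpack. One small caution on bookkeeping: depending on the formulation, what is ``asymptotic'' is the conditional-negative-definite inequality itself (not only the $\rho_\pm$ bounds), so each $h^t$ may only be approximately negative-type and it is the limit $h$ that is genuinely negative-type---but this is handled by the same passing-to-the-limit step you already describe.
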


Furthermore, the existence of a fibred coarse embedding \cite{fibred} implies the existence of an asymptotic embedding  \cite{BGW16}, so, in particular, we obtain the following.

\begin{cor} Under the assumptions of Proposition \ref{P:AtoAmenability}, if  $\cO_\Gamma Y$ admits a fibred coarse embedding into a Hilbert space, then $\Gamma$ has the Haagerup property.
\end{cor}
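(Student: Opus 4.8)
The plan is to chain together the two implications that have already been assembled in the paper. First I would invoke the result of \cite{BGW16}, cited in the sentence preceding the corollary, which states that a fibred coarse embedding of a metric space into a Hilbert space gives rise to an asymptotic embedding (equivalently, an asymptotically conditionally negative definite kernel) into a Hilbert space. Applying this to $X = \cO_\Gamma Y$, the hypothesis that $\cO_\Gamma Y$ admits a fibred coarse embedding into $\H$ yields an asymptotic embedding of $\cO_\Gamma Y$ into $\H$. Then the preceding corollary applies verbatim: under the standing assumptions of Proposition~\ref{P:AtoAmenability} (the $\Gamma$-invariant measure $\mu$ on $Y$ and the positive-measure subset $P$ on which $\Gamma$ acts freely), an asymptotic embedding of $\cO_\Gamma Y$ into a Hilbert space forces $\Gamma$ to have the Haagerup property. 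Composing the two implications closes the argument.

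\textbf{Key steps, in order.} Step one: restate the input as ``$\cO_\Gamma Y$ has a fibred coarse embedding into $\H$.'' Step two: apply \cite{BGW16} to conclude ``$\cO_\Gamma Y$ has an asymptotic embedding into $\H$.'' Step three: apply the previous corollary to conclude ``$\Gamma$ has the Haagerup property.'' Strictly speaking, this is a one-line deduction once the previous corollary and the external result are in place — there is essentially nothing to prove beyond citing the chain of implications — so the write-up can be a single sentence. If one wanted a self-contained account, one would need to trace through the definitions: a fibred coarse embedding provides, fibrewise over a controlled cover, a family of Hilbert-space-valued maps with compatible transition data and uniform control functions $\rho_\pm$; the passage to an asymptotic embedding is exactly the content imported from \cite{fibred, BGW16}, and then the kernel $k(x,x') = \|f(x) - f(x')\|^2$ is replaced by the asymptotically negative-definite kernel, which is precisely what the ``relaxation'' remark after Proposition~\ref{P:AtoAmenability} observed suffices to run formula~\eqref{AtoAmenability:formula} and extract a proper negative-type function $h$ on $\Gamma$.

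\textbf{Main obstacle.} There is no genuine obstacle internal to this paper: the work has already been done in the proof of Proposition~\ref{P:AtoAmenability}~\eqref{AtoAmenability:cond2}, whose argument (as the remark after it emphasises) uses only an asymptotically conditionally negative definite kernel rather than a genuine one, and in the preceding corollary. The only thing being invoked from outside is the implication ``fibred coarse embeddability $\Rightarrow$ asymptotic embeddability,'' which is the cited theorem of \cite{BGW16} building on \cite{fibred}; verifying that implication is not part of this paper's task. So the entire content is bookkeeping: confirming that the standing hypotheses of Proposition~\ref{P:AtoAmenability} are the only hypotheses needed and that the two implications compose. Accordingly I would simply write: by \cite{BGW16}, a fibred coarse embedding of $\cO_\Gamma Y$ into $\H$ yields an asymptotic embedding of $\cO_\Gamma Y$ into $\H$, and the claim follows from the previous corollary.
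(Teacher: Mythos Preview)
Your proposal is correct and matches the paper's approach exactly: the paper presents this corollary without a separate proof, deriving it in one line from the cited fact that fibred coarse embeddability implies asymptotic embeddability \cite{BGW16} together with the preceding corollary. Your identification of the chain of implications and of the fact that nothing beyond bookkeeping is required is precisely right.
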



\section{Embeddable cones without property A}\label{S:main}

Let us recall some facts about profinite completions of groups.

Throughout this section $\Gamma$ will always be a finitely generated group and $(\Gamma_n)_{n\geq 1}$ will be a decreasing sequence of its finite index normal subgroups.

Denote by $G_n$ the quotient group $\Gamma/\Gamma_n$, and let $G_0=\Gamma/\Gamma$ be the trivial group. We have the following sequence of epimorphisms $1 \twoheadleftarrow G_1 \twoheadleftarrow G_2 \twoheadleftarrow \ldots$. The homomorphism $G_n\onto G_{n-1}$ will be denoted by $f_n$. We will call the inverse limit of this sequence the completion of $\Gamma$ with respect to $(\Gamma_n)$ (it is also sometimes called ``the boundary of the coset tree'') and denote it by 
$\widehat\Gamma((\Gamma_n))$. It refines the notion of the profinite completion $\widehat\Gamma$ of a group $\Gamma$, which is obtained if one defines $\Gamma_n$ as the intersection of all subgroups of $\Gamma$ of index at most $n$.

Completion $\widehat\Gamma((\Gamma_n))$ can be viewed as the following subset of the Cartesian product $\prod_n G_n$:
$$G = \{(g_n)_{n\geq 0} \st g_{n-1}=f_n(g_n)\ \forall n\geq 1 \}.$$
Note that this is a compact subgroup in the product. Observe also that the product of quotient maps $\Gamma\to G_n$ gives a homomorphism from $\Gamma$ to $G$ with dense image. The kernel of this map is $\bigcap_n \Gamma_n$. In particular, if the intersection is trivial, then the action of $\Gamma$ on $G$ by left multiplication is free.

Define a metric on $\widehat\Gamma((\Gamma_n))$ by:
$$d((g_n), (g'_n)) = \max \{ a_n \cdot d_\mathrm{bin}(g_n, g_n') \},$$
where $a_n$ is a tending to zero sequence of positive reals and $d_\mathrm{bin}$ is the discrete $\{0,1\}$-valued metric.

\begin{rem} Completion $\widehat\Gamma((\Gamma_n))$ admits a $\Gamma$-invariant measure (the Haar measure) and metric (defined above).
\end{rem}

Let us recall the original definition of property A introduced in \cite{Yu}.

\begin{defi} A metric space $X$ has property A, if for any $\eps>0$ and $R<\infty$, one can associate to every point $x\in X$ a finite set $A_x \subseteq X\times \N$ in such a way that:
$$\frac{|A_x\triangle A_y|}{|A_x\cap A_y|} < \eps$$
for any $x,y\in X$ with $d(x,y)<R$ (where $\triangle$ denotes the symmetric difference) and there is a constant $S=S(\eps,R)$ such that $A_x\subseteq B(x,S)\times \N$ for every $x\in X$.
\end{defi}

In what follows, on $G_n$ we will consider the right invariant word metric $d_{G_n}$ related to the image of $S$ under the quotient map; in other words, the distance between $g_n$ and $g_n'$ is the length of the shortest $\gamma\in \Gamma$ such that $\gamma g_n = g_n'$.

The next theorem is a generalisation of the well-known result for box spaces. The author would like to acknowledge that it was Piotr Nowak who suggested that all the conditions are equivalent.

\noindent\begin{minipage}{\textwidth}
\begin{thm}\label{T:A=Amen} Assume that the sequence $(\Gamma_n)_{n\geq 1}$ intersects trivially. The following conditions are equivalent:
\begin{enumerate}[label={\roman*)}, ref={\roman*},itemsep=.76ex]
\item\label{A=Amen:cond1} $\Gamma$ is amenable;
\item\label{A=Amen:cond2'} $\cO_\Gamma \widehat\Gamma$ has property A;
\item\label{A=Amen:cond2} $\cO_\Gamma \widehat\Gamma((\Gamma_n))$ has property A;
\item\label{A=Amen:cond3} $\bigsqcup_n G_n$ has property A.
\end{enumerate}
\end{thm}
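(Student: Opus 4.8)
The plan is to establish the cycle of implications
\ref{A=Amen:cond1} $\Rightarrow$ \ref{A=Amen:cond2'} $\Rightarrow$ \ref{A=Amen:cond2} $\Rightarrow$ \ref{A=Amen:cond3} $\Rightarrow$ \ref{A=Amen:cond1}, invoking the machinery already developed. First I would handle \ref{A=Amen:cond2} $\Rightarrow$ \ref{A=Amen:cond3}: by the discussion in Section \ref{S:expanders} (``Warped metric on an orbit''), each $G_n=\Gamma/\Gamma_n$ appears as a finite orbit of the $\Gamma$-action on $\widehat\Gamma((\Gamma_n))$, and for $s$ large enough the warped metric $d_s$ restricted to this orbit coincides with the right-invariant word metric $d_{G_n}$; moreover, as $n$ varies these orbits are ``far apart'' deep in the cone (their pairwise $d_s$-distances tend to infinity with $s$), so a suitable sequence of slices $\{s_n\}\times \widehat\Gamma((\Gamma_n))$ contains an isometrically (coarsely) embedded copy of the coarse disjoint sum $\bigsqcup_n G_n$. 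Since property A passes to subspaces, Proposition \ref{P:propA} (property A of the cone implies uniform property A of its slices) yields property A of $\bigsqcup_n G_n$.

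For \ref{A=Amen:cond3} $\Rightarrow$ \ref{A=Amen:cond1}, I would use the standard box-space argument: property A of $\bigsqcup_n G_n$ gives, for each $R,\eps$, Hulanicki--Reiter-type functions $a_n\colon G_n\to\ell_1(G_n)_{1,+}$ with uniform localisation constant $S$; pulling these back along the quotient maps $\Gamma\onto G_n$ and using that $\Gamma_n$ intersects trivially, so that for any fixed finite set the balls of radius $S$ in $G_n$ eventually inject from $\Gamma$, one extracts in the limit a Reiter sequence on $\Gamma$, i.e. amenability. Then \ref{A=Amen:cond1} $\Rightarrow$ \ref{A=Amen:cond2'} is the easiest direction: if $\Gamma$ is amenable then $\Gamma$ acting on the compact space $\widehat\Gamma$ preserving the Haar measure makes the warped cone amenable in the appropriate sense — more concretely, amenability of $\Gamma$ gives uniform Hulanicki--Reiter functions on each slice $\{s\}\times\widehat\Gamma$ (average the pullbacks of Reiter functions of $\Gamma$ against the Haar measure, using that $d_s$ on a slice is controlled by the word metric via Remark \ref{r:metricConvergence}), and Proposition \ref{P:propA} upgrades this to property A of $\cO_\Gamma\widehat\Gamma$. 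Finally \ref{A=Amen:cond2'} $\Rightarrow$ \ref{A=Amen:cond2}: the profinite completion $\widehat\Gamma$ is the completion with respect to the particular sequence $\Gamma_n=\bigcap\{H<\Gamma : [\Gamma:H]\le n\}$, and the identity map $\widehat\Gamma\to\widehat\Gamma((\Gamma_n))$ induced by comparing the two inverse systems is $\Gamma$-equivariant and continuous; passing to cones it is a coarse equivalence (or at least property A transfers), since any sequence $(\Gamma_n)$ is cofinal in — equivalently, interleaves with — the profinite one after passing to subsequences, and property A is a coarse invariant stable under taking such ``thickenings''; I would make this precise using the quasi-metric $D_\Gamma$ of Proposition \ref{P:quasimetric} to compare the two warped cones on the common dense orbit $\Gamma$.

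The main obstacle I expect is the careful bookkeeping in \ref{A=Amen:cond2'} $\Rightarrow$ \ref{A=Amen:cond2}: the two completions $\widehat\Gamma$ and $\widehat\Gamma((\Gamma_n))$ are genuinely different compact spaces and one must argue that property A of one warped cone forces property A of the other. The cleanest route is probably to run the equivalence \ref{A=Amen:cond2} $\Leftrightarrow$ \ref{A=Amen:cond3} in the generality of an arbitrary trivially-intersecting $(\Gamma_n)$ (which is exactly how the theorem is stated), and separately \ref{A=Amen:cond2'} $\Leftrightarrow$ (the box-space version for the profinite sequence) $\Leftrightarrow$ \ref{A=Amen:cond1}; then both cone conditions are squeezed between amenability on both sides and the loop closes without ever directly comparing the two compact spaces. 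In other words, I would not prove \ref{A=Amen:cond2'} $\Rightarrow$ \ref{A=Amen:cond2} directly but rather route it as \ref{A=Amen:cond2'} $\Rightarrow$ \ref{A=Amen:cond1} $\Rightarrow$ \ref{A=Amen:cond2}, using that \ref{A=Amen:cond1} $\Rightarrow$ \ref{A=Amen:cond2'} already works verbatim for the sequence $(\Gamma_n)$ in place of the profinite one. This reduces the whole theorem to three building blocks: (a) amenability of $\Gamma$ $\Rightarrow$ uniform Hulanicki--Reiter on the slices of $\cO_\Gamma\widehat\Gamma((\Gamma_n))$ (hence property A by Proposition \ref{P:propA}), working for any trivially-intersecting $(\Gamma_n)$; (b) property A of the cone $\Rightarrow$ property A of $\bigsqcup_n G_n$ via the isometrically embedded slices and Proposition \ref{P:propA}; (c) property A of $\bigsqcup_n G_n$ $\Rightarrow$ amenability of $\Gamma$ by the standard pullback-and-limit argument using $\bigcap\Gamma_n=\{1\}$.
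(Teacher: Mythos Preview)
Your building block (b) contains a genuine error. You claim, via Section~\ref{S:expanders}, that each $G_n$ ``appears as a finite orbit of the $\Gamma$-action on $\widehat\Gamma((\Gamma_n))$'', but under the hypothesis $\bigcap_n\Gamma_n=\{1\}$ the left-multiplication action is \emph{free}: the stabiliser of any point $(g_n)$ is exactly $\bigcap_n\Gamma_n$. There are no finite orbits, so the ``Warped metric on an orbit'' discussion does not apply. The conclusion you want is nonetheless true, but for a different reason: at a well-chosen scale $s=s(n)$ the \emph{entire slice} $\{s(n)\}\times\widehat\Gamma((\Gamma_n))$ becomes $(1,1)$-quasi-isometric to $G_n$, because $d_{s(n)}$ collapses the fibres of $\widehat\Gamma((\Gamma_n))\to G_n$ while agreeing with $\delta_n$ on the quotient. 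This is Corollary~\ref{C:embeddedBoxes}, and it requires the metric assumption of Convention~\ref{conv}, which the statement of Theorem~\ref{T:A=Amen} does not make; without it you would need additional work. Your block (a) is also too vague as written: the na\"ive Hulanicki--Reiter function $g\mapsto |F|^{-1}\sum_{f\in F}\delta_{fg}$ built from a F{\o}lner set $F$ handles the ``$\Gamma$-part'' of $d_s$ but fails completely for nearby points in different orbits, and you do not explain how averaging against Haar measure repairs this.

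The paper's proof avoids both issues by organising the implications differently. For \eqref{A=Amen:cond2'}$\Rightarrow$\eqref{A=Amen:cond1} and \eqref{A=Amen:cond2}$\Rightarrow$\eqref{A=Amen:cond1} it applies Proposition~\ref{P:AtoAmenability} directly (free action, Haar measure), never passing through the box space. For \eqref{A=Amen:cond1}$\Rightarrow$\eqref{A=Amen:cond2'}/\eqref{A=Amen:cond2} it does not build Hulanicki--Reiter functions by hand; instead it quotes Roe's result \cite{cones}*{Proposition~3.1} that warping a property-A space by an amenable group preserves property~A, and checks separately that the \emph{unwarped} cone $\cO\widehat\Gamma((\Gamma_n))$ has property~A --- this is almost trivial since the metric on the profinite completion is an ultrametric, so singleton sets $A_g$ suffice. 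The equivalence \eqref{A=Amen:cond1}$\Leftrightarrow$\eqref{A=Amen:cond3} is simply cited as classical. This route is shorter and, crucially, does not depend on any assumption on the sequence $(a_n)$.
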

\end{minipage}
\begin{proof} The equivalence \eqref{A=Amen:cond1}$\iff$ \eqref{A=Amen:cond3} is a well known fact, see \cite{Roe}.

Implications \eqref{A=Amen:cond2'}$\implies$\eqref{A=Amen:cond1} and \eqref{A=Amen:cond2}$\implies$\eqref{A=Amen:cond1} follow immediately from Proposition~\ref{P:AtoAmenability}.

For the converse implications, the assumption $\bigcap_n \Gamma_n = \{1\}$ is not needed. We will use a version of \cite{cones}*{Proposition 3.1}, which asserts that if
$(Z,d^Z)$ has property A and admits a coarse action of an amenable group $\Delta$, then the warped space $(Z,d^Z_\Delta)$ also has property A.

The assumptions are indeed satisfied for $\cO\widehat\Gamma((\Gamma_n))$ (in particular, for  $\cO\widehat\Gamma$). By an argument similar to that in Proposition \ref{P:propA}, it suffices to check that each space $(\widehat\Gamma,sd)$ has property $A$ with a uniform bound on localisation constants and cardinalities of the sets from the definition of property A. Fix $R$. Let $n$ be the greatest integer such that $sa_n \geq R$. For $g=(g_i)$, we define $A_g$ simply as the singleton of $(g_1,\ldots, g_n, h_{n+1}, h_{n+2}, \ldots)$, where $h_m$ are chosen in the same way for all $g$ with the first $n$ coordinates equal. Clearly, if $sd(g,g') < R$, then $A_{g}=A_{g'}$, and we also have $A_g \subseteq B(g,sa_{n+1})\subseteq B(g,R)$.
\end{proof}

\begin{conv}\label{conv} For the rest of this section, we assume that coefficients in the definition of metric $d$ on $\widehat\Gamma((\Gamma_n))$ satisfy $a_{n+1} < { a_n \over \diam G_{n} }$.
\end{conv}

\begin{thm}\label{T:main}
The following conditions are equivalent:
\begin{enumerate}[label={\roman*}),ref={\roman*}, itemsep=1.5ex]
\item\label{main:cond1} $\bigsqcup_n G_n$ embeds coarsely (resp. quasi-isometrically) into a Hilbert space;

\item\label{main:cond2} $\cO_\Gamma\widehat\Gamma((\Gamma_n))$ embeds coarsely (resp. quasi-isometrically) into a Hilbert space.
\end{enumerate}
Furthermore, if any of the above conditions holds and sequence $(\Gamma_n)$ intersects trivially, then $\Gamma$ has the Haagerup property.
\end{thm}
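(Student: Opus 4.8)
\textbf{Proof proposal for Theorem \ref{T:main}.}

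The plan is to reduce the equivalence to the section-by-section criterion of Lemma \ref{L:coneEmbeddability} (and Corollary \ref{C:quasi-isometriesPreserved} for the quasi-isometric case), and then to match up the sections $\{s\}\times\widehat\Gamma((\Gamma_n))$ with the finite groups $G_n$. The direction \eqref{main:cond2}$\implies$\eqref{main:cond1} should be the easy one: by Remark \ref{r:metricConvergence} and the discussion of the warped metric on an orbit, for $s$ large the warped metric $d_s$ restricted to a suitable finite subset of $\widehat\Gamma((\Gamma_n))$ coincides with the quotient word metric on $G_n$; since the coset tree surjects onto each $G_n$, one finds inside $\cO_\Gamma\widehat\Gamma((\Gamma_n))$ an isometrically (coarsely isometrically) embedded copy of each $G_n$ at an appropriate depth, and Convention \ref{conv} guarantees that as $s\to\infty$ the embedded copies of $G_n$ for growing $n$ sit at distances from one another comparable to those in the coarse disjoint sum $\bigsqcup_n G_n$. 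Restricting a coarse (quasi-isometric) embedding of the cone to this subset therefore yields one of $\bigsqcup_n G_n$.

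For the converse \eqref{main:cond1}$\implies$\eqref{main:cond2}, I would argue that the section $(\widehat\Gamma((\Gamma_n)),s\,d)$ with the warped metric $d_s$ is, uniformly in $s$, coarsely equivalent (with fixed $\rho_\pm$) to one of the finite groups $G_n$ — namely, to $G_n$ where $n=n(s)$ is the largest index with $s a_n\geq 1$ or so. The key point is that $\widehat\Gamma((\Gamma_n))$ fibres over $G_n$ with fibres of diameter $\ll s a_{n+1}$ in the metric $s\,d$ (by Convention \ref{conv}), while on the quotient $G_n$ the warped metric $d_s$ agrees, up to bounded error, with the word metric $d_{G_n}$: in one direction $d_s(\gamma g,\gamma g')\le$ quotient distance because one can realise the motion by generators, and in the other direction collapsing along the tree only shrinks distances, and for $s$ in the relevant range the "tree part" $a_m$ for $m>n$ contributes $O(1)$. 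Thus a uniform family of coarse embeddings $G_n\to\H$ (with common $\rho_\pm$, available since $\bigsqcup_n G_n$ embeds coarsely) pulls back, via the fibration $\widehat\Gamma((\Gamma_n))\onto G_n$, to a uniform family of coarse embeddings of the sections, and Lemma \ref{L:coneEmbeddability} finishes the job. For the quasi-isometric statement one uses Corollary \ref{C:quasi-isometriesPreserved} together with Remark \ref{R:automaticallyAffine} (or simply checks the $\rho_\pm$ stay affine, which they do since all the comparisons above are bi-Lipschitz-with-bounded-additive-error).

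For the final clause, assume one of the conditions holds and $\bigcap_n\Gamma_n=\{1\}$. Then the $\Gamma$-action on $G=\widehat\Gamma((\Gamma_n))$ by left multiplication is free (as noted just before the definition of the metric), the Haar measure is $\Gamma$-invariant, and one may take $P=G$; since \eqref{main:cond2} gives a coarse embedding of $\cO_\Gamma\widehat\Gamma((\Gamma_n))$ into a Hilbert space, Proposition \ref{P:AtoAmenability}\eqref{AtoAmenability:cond2} immediately yields the Haagerup property of $\Gamma$.

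The main obstacle I anticipate is the bookkeeping in \eqref{main:cond1}$\implies$\eqref{main:cond2}: making precise, with \emph{uniform} control functions $\rho_\pm$, the claim that $(\widehat\Gamma((\Gamma_n)),d_s)$ is coarsely the same as $G_{n(s)}$ for \emph{all} $s$ simultaneously — in particular handling the transitional ranges of $s$ between $1/a_n$ and $1/a_{n+1}$, where the relevant quotient is $G_n$ but the metric $d_s$ is still "seeing" a rescaled copy of $G_{n+1}$'s structure at a smaller scale. Here Convention \ref{conv} is exactly what is needed to keep that finer structure below the scale $R$ at which property-A-type / embedding estimates are tested, but turning that into clean uniform bounds on $\rho_-$ and $\rho_+$ (and verifying the diameters $\diam(\{s\}\times Y,d_s)$ grow slowly enough to apply the small-$\rho_+$ trick of \cite{growthOfCocycles} as in Lemma \ref{L:coneEmbeddability}) is the part that requires care.
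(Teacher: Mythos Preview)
Your ``furthermore'' clause and the direction \eqref{main:cond2}$\implies$\eqref{main:cond1} are essentially the paper's; for the latter the paper simply picks $s(n)=\diam G_n/a_n$ and checks, using Convention~\ref{conv}, that $\delta_n\leq d_{s(n)}\leq\delta_n+1$, so the whole section at level $s(n)$ is $(1,1)$-quasi-isometric to $G_n$. Your orbit-based description can be made to say the same thing.

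The genuine gap is in \eqref{main:cond1}$\implies$\eqref{main:cond2}. Your central claim --- that $(\widehat\Gamma((\Gamma_n)),d_s)$ is \emph{uniformly} coarsely equivalent to the single group $G_{n}$ with $n=n(s)$ the largest index satisfying $sa_n\geq 1$ --- is false, and the obstacle is not the one you identify. You correctly note that the fibres of $\widehat\Gamma((\Gamma_n))\onto G_n$ have $d_s$-diameter at most $sa_{n+1}<1$, and that $d_s\leq d_{G_n}+1$. But the lower bound fails. Using the isometry of the action and Proposition~\ref{P:quasimetric} one gets $d_s(g,g')=\inf_m\big(\delta_{m-1}(g,g')+sa_m\big)$ with $\delta_m=d_{G_m}$; taking $m=n$ yields $d_s\leq \delta_{n-1}+sa_n$. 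When $s$ is near the left end of $[1/a_n,1/a_{n+1})$ we have $sa_n\approx 1$, so points $g,g'$ lying over the same coset in $G_{n-1}$ satisfy $d_s(g,g')\leq sa_n\approx 1$ no matter how large $\delta_n(g,g')$ is. Hence the projection to $G_n$ is not uniformly expansive, and no single $G_m$ models the section with uniform $\rho_-$. Your worry about ``$G_{n+1}$'s structure at a smaller scale'' is a red herring: that contribution is genuinely bounded by $sa_{n+1}<1$; the real interpolation happens one level \emph{up}, between $G_{n-1}$ and $G_n$.

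The paper's resolution is to show (via the formula above and Convention~\ref{conv}) that $d_s$ agrees, up to multiplicative constant $2$ and additive constant $1$, with the \emph{sum} of two pseudometrics $d_s^1=\delta_{N-1}$ and $d_s^2=\min(\delta_N,\,sa_N)$, where $N=n(s)$. The first embeds directly via $\phi_{N-1}$. The second is a \emph{truncated} word metric on $G_N$, and the missing ingredient in your outline is how to embed it into Hilbert space with uniform control: the paper applies the Bernstein function $m_l(r)=l(1-e^{-r/l})$ with $l=(sa_N)^2$ to the negative-type kernel $\|\phi_N(g)-\phi_N(g')\|^2$ and invokes Schoenberg's lemma, producing a Hilbert embedding whose distance is uniformly bi-Lipschitz to $\min(\|\phi_N(g)-\phi_N(g')\|,\,sa_N)$, hence (through the given $\rho_\pm$) to $d_s^2$. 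This Schoenberg/Bernstein truncation step is the idea your proposal is lacking.
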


\begin{proof}
\emph{``Furthermore'' part.}
Since the $\Gamma$-action on $\widehat\Gamma((\Gamma_n))$ is free, condition \eqref{main:cond2} implies the Haagerup property by Proposition \ref{P:AtoAmenability}. Alternatively, the fact that condition \eqref{main:cond1} implies the Haagerup property of $\Gamma$ if the intersection of $(\Gamma_n)$ is trivial is well-known, see \cite{Roe}.

\emph{Implication \eqref{main:cond1}$\implies$\eqref{main:cond2}.} By the assumption, there are nondecreasing functions $\rho_-, \rho_+\colon [0,\infty)\to \R$ tending to infinity and for each $n$ there is a map $\phi_n\colon G_n \to \H$ satisfying for all $\gamma,\gamma'\in G_n$:
$$\rho_-\circ d_{G_n}(\gamma, \gamma') \leq \|\phi_n(\gamma)-\phi_n(\gamma') \| \leq \rho_+\circ d_{G_n}(\gamma, \gamma'),$$
where $d_{G_n}$ is the \emph{right} invariant metric on $G_n$. In case of quasi-isometric embeddings, we require $\rho$ functions to be affine and increasing.

By Lemma \ref{L:coneEmbeddability} and Corollary \ref{C:quasi-isometriesPreserved} it is enough to construct embeddings of $(\widehat\Gamma((\Gamma_n)),d_s)$ with uniform estimates. Observe that $d(g,g') = a_{n(g,g')}$, where $n(g,g')$ is the minimal index such that $g_n\neq g'_n$ (we put $a_\infty=0)$.

Fix $s \geq 1$.
Let us denote $\delta_n(g,g')=d_{G_n}(g_n,g'_n)$.
Since the action is isometric, by Proposition \ref{P:quasimetric} we obtain:
\begin{multline}\label{main:equality}
d_s(g,g') = \min_\gamma (|\gamma| + sd(\gamma g, g')) \\
= \min_\gamma (|\gamma| + sa_{n(\gamma g, g')})=\inf_n(\delta_{n-1}(g,g') + sa_n).
\end{multline}
(the passage from minimum to infimum is necessary if $g'=\gamma g$ for some $\gamma$; then we also have $n(\gamma g', g)=\infty$).

Since $n\mapsto \delta_n(g,g')$ is increasing and $n\mapsto sa_n$ is decreasing, we get the following (for any $k\in \N$):
\begin{multline}\label{main:inequality}
d_s(g,g')
= \min\Big(\inf_{n\leq k}(\delta_{n-1}(g,g') + sa_n),\ \inf_{n>k}(\delta_{n-1}(g,g') + sa_n) \Big) \\
\geq \min(sa_{k},\ \delta_{{k}}(g,g'))
\end{multline}
Let $N$ be the largest integer such that 
$sa_N\geq 1$.
By Convention \ref{conv} we obtain $sa_{N-1} \geq \diam G_{N-1}\geq \delta_{N-1}(g,g')$, so formula \eqref{main:inequality} for $k=N-1$ boils down to:
\begin{equation}\label{main:secondInequality}
d_s(g,g') \geq \delta_{N-1}(g,g')
\end{equation}
Adding inequality \eqref{main:inequality} for $k=N$ and the above inequality \eqref{main:secondInequality} we obtain:
\begin{equation}\label{main:thirdInequality}
2d_s(g,g')\geq \delta_{N-1}\left(g,g'\right) + \min\left(sa_N,\ \delta_N(g,g')\right) \eqdef d_s'(g,g')
\end{equation}
and by formula \eqref{main:equality} we get the converse estimate:
\begin{align*}
d_s(g,g')&\leq
\min\left(\delta_{N-1}\left(g,g'\right) + sa_N,\ \delta_{N}(g,g') + sa_{N+1}\right) \\
&\leq
\delta_{N-1}\left(g,g'\right) + \min\big(sa_N,\ \delta_{N}(g,g')\big) + 1 \\
&= d_s'(g,g')+1,
\end{align*}
which means that the pseudometric $d_s'$ is quasi-isometric to metric $d_s$ with constants not depending on $s$.

Consequently (compare Appendix 
\ref{appendix}), it is enough to coarsely embed $\widehat\Gamma((\Gamma_n))$ separately with respect to the pseudometrics $d^1_s(g,g')=\delta_{{N-1}}(g,g')$ and $d^2_s(g,g')=\min\big(\delta_{N}(g,g'),\ sa_N\big)$. In the first case we can use the embedding $\map {\phi_{N-1}} {G_n} \H$ from the assumptions directly, by composing it with the quotient map $\widehat\Gamma((\Gamma_n))\to G_n$.

The case of $d_s^2$ is more involved. It is a matter of simple case analysis (under the assumption that $\rho_-(r)\leq r \leq \rho_+(r)$) to verify that:
\begin{multline*}
\rho_-\circ \min\big(\delta_{N}(g,g'),\ sa_N\big)
\leq \min\big(\|\phi_N(g)-\phi_N(g') \|,\ sa_N\big) \\
\leq \rho_+\circ \min\big(\delta_{N}(g,g'),\ sa_N\big),
\end{multline*}
which means that if we consider the metric $d_\H'$ on the Hilbert space $\H$ equal to the minimum of the norm metric and constant $sa_N$, then $\phi_N$ is a coarse embedding with respect to $d_s^2$ and control functions $\rho_-$ and $\rho_+$. 

Hence, as we know that pseudometric $d^2_s$ is coarsely equivalent to the pseudometric $d_\H'$ (considered as a metric on $\widehat\Gamma((\Gamma_n))$), it suffices to check that $\widehat\Gamma((\Gamma_n))$ can be embedded into a Hilbert space (with the standard metric) in a bi-Lipschitz way with respect to $d_\H'$.

For $r, l>0$ let $m_{l}(r) = l(1-\exp(-r/l))$. We have:
$$
{\min(r,l) \over m_{l}(r)} =
{l \min(r/l,1) \over l(1-\exp(-r/l))} =
{\min(r/l,1) \over 1-\exp(-r/l)},
$$
and thus these quotients are bounded:
$$1 \leq {\min(x,1) \over 1-e^{-x}} \leq \sup_{x\in(0,\infty)} {\min(x,1) \over 1-e^{-x}} = \sup_{x\in(0,1]} {\min(x,1) \over 1-e^{-x}} = {1 \over 1-e^{-1}}.$$

Denote by $k_N$ the negative-type kernel associated with the embedding $\phi_N$ given by $k_N(g,g') = \|\phi_N(g)-\phi_N(g')\|^2$.
Since $m_l$ is a Bernstein function, kernel $m_l\circ k_N$ is still negative-type by the Schoenberg's lemma (e.g. \cite{Roe}*{Proposition 11.12}) and, as such, yields an embedding $\varphi_N$ into a Hilbert space. For $l=(sa_N)^2$ we have:
$${\|\varphi_N(g) - \varphi_N(g')\|^2 \over d_H'(\phi_N(g), \phi_N(g'))^2}
= {{m_l\circ k_N(g,g')} \over {\min(k_N(g,g'),\ l)}}
\in \left[{1-e^{-1}},\ 1 \right],$$
which confirms the aforementioned bi-Lipshitz equivalence.

\emph{Implication \eqref{main:cond2}$\implies$\eqref{main:cond1}.} Fix $n$ and let $s(n)=\frac{\diam G_n}{a_n}$. Then, $s(n)a_{n} \geq \diam G_n$ and $s(n)a_{n+1}<1$, so from formulas \eqref{main:secondInequality} and \eqref{main:equality} we get
$$\delta_n \leq d_{s(n)} \leq \delta_n + sa_{n+1} \leq \delta_n + 1,$$
so $s(n)$-th section of the warped cone is $(1,1)$-quasi-isometric to $G_n$, which ends the proof.
\end{proof}

Coarse embeddability in $\ell^p$ for $p\in[1,2]$ is equivalent, so if we do not insist on quasi-isometric embeddings, then we can put any such space (or even two different) in Theorem \ref{T:main}. However, it does not resolve the following problem.

\begin{question} Does the equivalence from Theorem \ref{T:main} hold for $\ell_p$ spaces with $p>2$?
\end{question}

Note that $\frac{s(n+1)}{s(n)} = \frac{\diam G_{n+1}}{a_{n+1}} \cdot \frac{a_n}{\diam G_n} > \diam G_{n+1}$, meaning that $\{s(n) \st n\in \N\}$ is a coarse disjoint sum of points. Thus, from the proof of implication \eqref{main:cond2}$\implies$\eqref{main:cond1} above, one immediately concludes the following corollary.

\begin{cor}\label{C:embeddedBoxes}
The subspace $\bigcup_n \{s(n)\} \times \widehat\Gamma((\Gamma_n)) \subseteq \cO_\Gamma \widehat\Gamma((\Gamma_n))$ is (1,1)-quasi-isometric to a coarse disjoint sum $\bigsqcup_n G_n$.
\end{cor}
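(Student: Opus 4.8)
The plan is to extract the claim directly from the machinery built in the proof of Theorem~\ref{T:main}, specifically from the implication \eqref{main:cond2}$\implies$\eqref{main:cond1}, rather than to argue from scratch. The two ingredients are: (a) for each $n$, the section $\{s(n)\}\times\widehat\Gamma((\Gamma_n))$ of the warped cone, equipped with $d_{s(n)}$, is $(1,1)$-quasi-isometric to $G_n$ with its right-invariant word metric; and (b) the scales $s(n)$ grow fast enough that the collection of sections, sitting inside the warped cone, is coarsely the same as the \emph{disjoint} union of the sections.

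\begin{proof}
By the computation in the proof of implication \eqref{main:cond2}$\implies$\eqref{main:cond1}, with $s(n)=\frac{\diam G_n}{a_n}$ we have $s(n)a_n\geq \diam G_n$ and $s(n)a_{n+1}<1$, so combining \eqref{main:secondInequality} and \eqref{main:equality} yields $\delta_n(g,g')\leq d_{s(n)}(g,g')\leq \delta_n(g,g')+1$ for all $g,g'$. Since $\delta_n(g,g')=d_{G_n}(g_n,g'_n)$ depends only on the $n$-th coordinates and every element of $G_n$ is hit, the quotient map $\widehat\Gamma((\Gamma_n))\to G_n$ induces a $(1,1)$-quasi-isometry between $(\{s(n)\}\times\widehat\Gamma((\Gamma_n)),d_{s(n)})$ and $(G_n,d_{G_n})$.

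It remains to see that $\bigcup_n\{s(n)\}\times\widehat\Gamma((\Gamma_n))$, as a subspace of $\cO_\Gamma\widehat\Gamma((\Gamma_n))$, is a coarse disjoint sum of these sections, i.e.\ that for $m\neq n$ the $d_\Gamma$-distance between a point of the $m$-th section and a point of the $n$-th section exceeds any prescribed bound once $\min(m,n)$ is large. By the form of the cone metric (Definition~\ref{D:warpedCone}) and Lemma~\ref{L:metricWarpedCone}, for $m<n$ and any $y,y'$ we have $d_\Gamma((s(m),y),(s(n),y'))\geq s(n)-s(m)$. As noted just before the corollary, $\frac{s(n+1)}{s(n)}>\diam G_{n+1}\to\infty$ (indeed $s$ is strictly increasing and the ratios blow up), so $s(n)-s(m)\geq s(m+1)-s(m)=s(m)(\frac{s(m+1)}{s(m)}-1)\to\infty$ as $m\to\infty$; hence $\{s(n)\st n\in\N\}$ is a coarse disjoint union of points and consequently $\bigcup_n\{s(n)\}\times\widehat\Gamma((\Gamma_n))$ is the coarse disjoint union of its sections. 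Composing the fibrewise $(1,1)$-quasi-isometries of the previous paragraph with this identification gives a $(1,1)$-quasi-isometry onto $\bigsqcup_n G_n$.
\end{proof}

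The only genuinely new observation beyond what is already in the proof of Theorem~\ref{T:main} is the coarse-disjointness of the sections, and that is immediate from the growth estimate $\frac{s(n+1)}{s(n)}>\diam G_{n+1}$ recorded right before the corollary statement; so I expect no real obstacle — the corollary is essentially a bookkeeping consequence of the theorem's proof, and the main point is simply to assemble the fibrewise quasi-isometries into a single global one using that the cone metric separates distinct sections by at least $|s(m)-s(n)|$.
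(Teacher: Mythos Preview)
Your proof is correct and follows exactly the approach the paper intends: the paper states the corollary as an immediate consequence of the proof of implication \eqref{main:cond2}$\implies$\eqref{main:cond1} together with the growth estimate $\frac{s(n+1)}{s(n)}>\diam G_{n+1}$ recorded just before the corollary, and you have simply spelled out those two ingredients (the fibrewise $(1,1)$-quasi-isometry from $\delta_n\leq d_{s(n)}\leq\delta_n+1$, and the coarse disjointness of the sections via $s(n+1)-s(n)\to\infty$). There is nothing to add.
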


An immediate consequence of Theorems \ref{T:A=Amen} and \ref{T:main} is the following.

\begin{thm}\label{T:summary} Let $\Gamma_n \triangleleft\Gamma$ be a residual chain of finite index subgroups in a non-amenable group $\Gamma$ and assume that the coarse disjoint sum $\bigsqcup_n G_n$ embeds coarsely into a Hilbert space, where $G_n = \Gamma\slash \Gamma_n$.

Then, the warped cone $\cO_{\Gamma} \widehat\Gamma((\Gamma_n))$ embeds coarsely into a Hilbert space, and yet does not have property A.
\end{thm}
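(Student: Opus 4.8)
The plan is to deduce Theorem \ref{T:summary} directly from the machinery already assembled, without any new construction. The statement concerns a residual chain $(\Gamma_n)$ in a non-amenable $\Gamma$ with $\bigsqcup_n G_n$ coarsely embeddable in Hilbert space; residuality means precisely that $(\Gamma_n)$ intersects trivially, so both Theorems \ref{T:A=Amen} and \ref{T:main} apply verbatim.

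First I would dispatch the coarse embeddability. By hypothesis $\bigsqcup_n G_n$ embeds coarsely into a Hilbert space, which is condition \eqref{main:cond1} of Theorem \ref{T:main}. The implication \eqref{main:cond1}$\implies$\eqref{main:cond2} of that theorem then gives immediately that $\cO_\Gamma\widehat\Gamma((\Gamma_n))$ embeds coarsely into a Hilbert space. (One should note that Convention \ref{conv} on the coefficients $a_n$ is in force throughout the relevant section, so invoking Theorem \ref{T:main} is legitimate; this is a standing assumption on the metric, not an extra hypothesis on $\Gamma$.) This settles the first assertion.

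Next I would rule out property A. Since $\Gamma$ is non-amenable, condition \eqref{A=Amen:cond1} of Theorem \ref{T:A=Amen} fails. Because $(\Gamma_n)$ intersects trivially, Theorem \ref{T:A=Amen} applies, and the equivalence \eqref{A=Amen:cond1}$\iff$\eqref{A=Amen:cond2} shows that $\cO_\Gamma\widehat\Gamma((\Gamma_n))$ does not have property A. Combining the two paragraphs yields exactly the conclusion: the warped cone coarsely embeds into Hilbert space yet lacks property A.

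There is essentially no obstacle here — the theorem is a corollary, and the proof is a two-line citation of the preceding results, as the text itself announces (``An immediate consequence of Theorems \ref{T:A=Amen} and \ref{T:main}''). The only point requiring a moment's care is the bookkeeping of hypotheses: one must observe that ``residual chain'' supplies the trivial-intersection assumption needed by both theorems, and that the Hilbert-space embeddability hypothesis of Theorem \ref{T:summary} is literally condition \eqref{main:cond1} of Theorem \ref{T:main}. If one also wanted the parenthetical remark that $\Gamma$ has the Haagerup property, it would follow from the ``Furthermore'' clause of Theorem \ref{T:main}, but this is not part of the stated conclusion and can be omitted.
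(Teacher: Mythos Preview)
Your proposal is correct and matches the paper's approach exactly: the paper does not write out a proof but simply states that Theorem \ref{T:summary} is ``an immediate consequence of Theorems \ref{T:A=Amen} and \ref{T:main}'', which is precisely the two-line citation you give. Your care in noting that ``residual chain'' supplies the trivial-intersection hypothesis and that Convention \ref{conv} is a standing assumption is appropriate and fills in the only details the paper leaves implicit.
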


Examples of such sequences $(\Gamma_n)$ for free groups are the main results of \cites{AGS, Khukhro}.
These examples can be parametrised by an integer $m\geq 2$ (and also by the number $d\geq 2$ of free generators of $\Gamma=\F_d$). The case of $m=2$ is due to \cite{AGS} and the generalisation to $m>2$ was obtained in \cite{Khukhro}.
 Furthermore, we have permanence results for such sequences. By \cite{Khukhro12}, the semidirect product $\Gamma \rtimes \Delta$ of $\Gamma$ as above and an amenable, residually finite $\Delta$ also admits a sequence of quotients with the desired properties. By \cite{CDK}, the same holds for any wreath product $A\wr \Gamma$ with abelian $A$ (note that the published version \cite{CD} of this paper does not contain the relevant result).


\appendix
\section{Embeddings of product spaces}\label{appendix}

In the proof of Theorem \ref{T:main}, we used the fact that coarse embeddings into a Hilbert space of both Cartesian factors yield a coarse embedding of the Cartesian product (actually, we used the fact that if a metric is a sum of pseudometrics, then embeddings with respect to the pseudometrics give an embedding with respect to the metric). Below, we prove a uniform version of this fact for arbitrary $n$-fold Cartesian products in order to answer a problem of Nowak from \cite{Nowak}.

First, we need a lemma.

\begin{lem}\label{L:concave} Let $f,F\colon\R_+\to\R_+$ be nondecreasing and unbounded. If $f$ satisfies $\liminf_{s\to 0}{f(s)\over s} > 0$, then there is an unbounded and nondecreasing function $c\leq f$ such that $c(\theta r)\geq \theta c(r)$ for all $\theta\in[0,1]$ and $r\in [0,\infty)$. Similarly, if $F$ satisfies $\limsup_{s\to 0}{F(s)\over s}<\infty$, we have $C\geq F$ with $C(\theta r)\leq \theta C(r)$.
\end{lem}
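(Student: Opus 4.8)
The plan is to construct $c$ explicitly as a piecewise-linear (in fact, concave) minorant of $f$, and then obtain $C$ by a dual construction applied to a suitable inverse.

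First I would reduce the hypothesis $\liminf_{s\to 0} f(s)/s > 0$ to a usable form: pick $\delta > 0$ and $a > 0$ with $f(s) \geq a s$ for all $s \in [0,\delta]$. The key step is then to define $c$ as the \emph{lower concave envelope} of $\min(f, as)$ — equivalently, $c(r) = \inf\{ \text{affine } g \geq 0 : g \geq \min(f,as) \text{ on a neighbourhood}\}$ — but it is cleaner to build it by hand. Since $f$ is nondecreasing and unbounded, I would set, for each $r > 0$,
$$c(r) = \inf_{0 < t \leq r} \frac{r}{t}\,\min(f(t), at),$$
with $c(0) = 0$. This is the largest function $\leq \min(f,as)$ satisfying the superhomogeneity $c(\theta r) \geq \theta c(r)$: indeed $c(\theta r) = \inf_{t \leq \theta r}\frac{\theta r}{t}\min(f(t),at) \geq \theta \inf_{t \leq r}\frac{r}{t}\min(f(t),at) = \theta c(r)$. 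I would then check the three required properties: (i) $c \leq f$, immediate since taking $t = r$ gives $c(r) \leq \min(f(r), ar) \leq f(r)$; (ii) $c$ is nondecreasing, since if $r \leq r'$ then every competitor $t \leq r$ is also a competitor for $r'$ and $\frac{r'}{t} \geq \frac{r}{t}$; (iii) $c$ is unbounded — here I use that for $t \leq \delta$ we have $\min(f(t),at) = at$, so the infimum over $t \leq \delta$ contributes $\inf_{t\leq\delta}\frac{r}{t}\cdot at = ar$, while for $t \in [\delta, r]$ we have $\frac{r}{t}\min(f(t),at) \geq \frac{r}{r}\min(f(\delta), a\delta) = \min(f(\delta),a\delta)$ using monotonicity of $f$; hence $c(r) \geq \min(ar,\ \min(f(\delta),a\delta))$, which tends to infinity, and in fact $c(r) = ar$ for $r$ small.

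For the statement about $C$, I would argue by a symmetric/dual construction. The hypothesis $\limsup_{s\to 0} F(s)/s < \infty$ gives $b > 0$ and $\delta' > 0$ with $F(s) \leq bs$ on $[0,\delta']$. Define
$$C(r) = \sup_{t \geq r} \frac{r}{t}\,\max(F(t), bt) \quad\text{(interpreted as }0\text{ if the sup is }0\text{)},$$
which by the mirror image of the computation above is the smallest function $\geq \max(F,bs)$ with $C(\theta r) \leq \theta C(r)$; monotonicity and $C \geq F$ are checked as before. One must verify this supremum is finite for each $r$: for $t \geq r$ large, $F(t)$ could grow fast, but $\frac{r}{t}F(t)$ need not stay bounded in general — \emph{this is the main obstacle}. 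To handle it I would instead pass through the generalised inverse: set $g(u) = \inf\{ t : F(t) \geq u\}$, observe $g$ is nondecreasing, unbounded, and $\limsup_{s\to 0} F(s)/s < \infty$ translates to $\liminf_{u\to 0} g(u)/u > 0$; apply the already-proven first half of the lemma to $g$ to get a concave-type minorant $c_g \leq g$; then $C := c_g^{-1}$ (again a generalised inverse) is nondecreasing, unbounded, satisfies $C \geq F$, and the superhomogeneity $c_g(\theta r) \geq \theta c_g(r)$ dualises to the subhomogeneity $C(\theta r) \leq \theta C(r)$. The one genuinely delicate point is matching up the inverse manipulations so that the inequalities do not degrade — in particular confirming that $c_g \leq g$ yields $C = c_g^{-1} \geq g^{-1} \geq F$ up to the usual $\leq$ versus $<$ care with generalised inverses, and that homogeneity is preserved under inversion. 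I would verify the latter directly from the definition $C(r) = \sup\{ u : c_g(u) \leq r\}$: if $c_g(u) \leq r$ then $c_g(\theta u) \geq \theta c_g(u)$ does not immediately help, so instead I use $c_g(u/\theta) \geq \ldots$; the clean route is to note $C(\theta r) = \sup\{u : c_g(u) \leq \theta r\}$ and for such $u$, writing $u = \theta v$ is not forced, but $c_g(u) \leq \theta r$ together with superhomogeneity applied as $c_g(u) \geq \theta c_g(u/\theta)$ (valid since $\theta \in [0,1]$) gives $c_g(u/\theta) \leq r$, hence $u/\theta \leq C(r)$, i.e. $u \leq \theta C(r)$, so $C(\theta r) \leq \theta C(r)$ as desired.
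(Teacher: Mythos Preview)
Your construction of $c$ is essentially the paper's: the paper sets $c(t)=t\cdot\inf_{0<s\le t}f(s)/s$, which is your formula $\inf_{0<t\le r}\tfrac{r}{t}f(t)$ without the extra truncation by $at$ (that truncation is harmless but unnecessary). However, your unboundedness argument has a genuine gap. You conclude $c(r)\ge\min\bigl(ar,\ \min(f(\delta),a\delta)\bigr)$ and assert this tends to infinity, but the right-hand side is bounded above by the constant $\min(f(\delta),a\delta)$. A correct argument must let the splitting point move: for any $T\ge\delta$ and $r\ge T$, the infimum over $t\in[\delta,T]$ is at least $\tfrac{r}{T}\min(f(\delta),a\delta)$ while over $t\in[T,r]$ it is at least $\min(f(T),aT)$; sending first $r\to\infty$ and then $T\to\infty$ gives the conclusion. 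This is precisely the shape of the paper's argument.

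For $C$ you take a long detour through generalised inverses because your first attempt $C(r)=\sup_{t\ge r}\tfrac{r}{t}\max(F(t),bt)$ can be infinite. The paper's solution is far simpler and fully symmetric with the $c$ case: take the supremum over $t\le r$, not $t\ge r$. That is, set $C(r)=r\cdot\sup_{0<s\le r}F(s)/s$. This supremum is finite for every $r$ precisely because of the hypothesis $\limsup_{s\to 0}F(s)/s<\infty$ (the only possible blow-up is at $0$), and then $C\ge F$, monotonicity, and $C(\theta r)\le\theta C(r)$ are each one-line checks, with unboundedness immediate from $C\ge F$. Your inverse route can be made rigorous, but it is considerably more delicate than necessary and obscures the symmetry between the two halves of the lemma.
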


\begin{proof} Let 
\begin{eqnarray*}
m_t &=& \inf_{0 < s \leq t}{f(s)\over s} > 0, \\
M_t &=& \sup_{0 < s \leq t}{F(s)\over s} < \infty
\end{eqnarray*}
and define $c(t)=m_t\cdot t$ and $C(t)=M_t\cdot t$. We have $c(t)=m_t\cdot t \leq {f(t)\over t} \cdot t = f(t)$, and similarly $C(t)\geq F(t)$. Moreover, $c(\theta t) = m_{\theta t}\cdot \theta t \geq m_t \cdot \theta t = \theta c(t)$ and similarly for $C$. The coefficient $M_t$ is nondecreasing, so $C$ is increasing, and, as $C$ is greater than $F$, it tends to infinity. It is left to show the same for $c$.

Consider $t<t'$. Since $f$ is nondecreasing, we have:
$$m_{t'} = \min\left(m_t, \inf_{t \leq s \leq t'} {f(s)\over s}\right) \geq \min\left(m_t, {f(t)\over t'}\right)$$
and by multiplying both sides by $t'$ we obtain:
\begin{equation}\label{L:concave:inequality}
c(t') \geq \min(m_t\cdot t',\ f(t))\geq c(t),
\end{equation}
that is, $c$ is nondecreasing.

Let now $R<\infty$. Let $t$ be so large that $f(t)\geq R$ and let $t'
\geq \max(t,\ Rm_t^{-1})$. We conclude that function $c$ is unbounded by using inequality \eqref{L:concave:inequality} again: $c(t') \geq \min(m_t\cdot t',\ f(t))\geq R$.
\end{proof}

In some cases the assumptions of the above lemma are trivially satisfied.

\begin{longrem}\label{R:discretenessForRho} In order to guarantee $\liminf_{s\to 0}{f(s)\over s} > 0$, it is enough to assume $f\geq b$ for some positive constant $b$. Similarly, $F\colon [a,\infty)\to \R_+$ can be extended to the interval $[0,a]$ by putting $F(\theta a) = \theta F(a)$, which gives finiteness of $\limsup_{s\to 0}{F(s)\over s}$.
\end{longrem}

We are now ready to prove the main proposition of the appendix.

\begin{prop}\label{P:productEmbedding} Let $a,b>0$, $n\in \N$, $(X_i,d_i)_{1\leq i\leq n}$ be metric spaces and $p\in[1,\infty)$. We require $d_i(x,x')\geq a$ for $x\neq x'\in X_i$. Assume that maps $\psi^i_p\colon X_i\to \ell_p$ satisfy
$$\rho_-\circ d_i(x,x')\leq \|\psi^i_p(x) - \psi^i_p(x')\|_p \leq \rho_+\circ d_i(x,x')$$
for some unbounded and nondecreasing $\rho_-,\rho_+\colon[a,\infty)\to [b,\infty)$. Then, there are unbounded and nondecreasing $c=c(\rho_-)$ and $C=C(\rho_+)\colon [a,\infty)\to [b,\infty)$, and a map $\psi_p\colon \prod X_i\to \ell_p$ satisfying:
$$\sqrt[p]{b^{p-1}\cdot c\circ d(x,x')} \leq \|\psi_p(x)-\psi_p(x')\|_p \leq C\circ d(x,x'),$$
where $x=(x_i)$, $x'=(x'_i)\in \prod X_i$ and $d(x,x')=\sum d_i(x_i, x'_i)$.
\end{prop}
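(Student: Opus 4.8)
The idea is to build $\psi_p$ by stacking rescaled copies of the $\psi^i_p$ into $\ell_p = \bigoplus_i \ell_p$, and then to tune the concavity-type estimates so that a sum of distances in the base translates into a single control function in the Hilbert-type target. Concretely, using Lemma~\ref{L:concave} together with Remark~\ref{R:discretenessForRho} (the hypotheses $\rho_\pm\colon[a,\infty)\to[b,\infty)$ with $b>0$ give exactly $\liminf_{s\to 0}\rho_-(s)/s>0$ and $\limsup_{s\to 0}\rho_+(s)/s<\infty$ after the canonical linear extension to $[0,a]$), I obtain $c\leq\rho_-$ and $C\geq\rho_+$, unbounded and nondecreasing, with the scaling properties $c(\theta r)\geq\theta c(r)$ and $C(\theta r)\leq\theta C(r)$ for $\theta\in[0,1]$. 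The point of replacing $\rho_\pm$ by $c,C$ is that these functions interact well with the $p$-th power: concavity of $c$ forces superadditivity of $r\mapsto c(r)$ in a weak sense, and the $\theta$-homogeneity of $C$ gives subadditivity after raising to the $p$-th power.

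For the construction, set $\psi_p(x) = \bigl(\psi^i_p(x_i)\bigr)_{i=1}^n \in \bigoplus_{i=1}^n \ell_p$, so that
$$\|\psi_p(x)-\psi_p(x')\|_p^p = \sum_{i=1}^n \|\psi^i_p(x_i)-\psi^i_p(x'_i)\|_p^p.$$
For the upper bound I estimate each summand by $(\rho_+\circ d_i(x_i,x'_i))^p \leq (C\circ d_i(x_i,x'_i))^p$ and then use $C(d_i)\leq C(d)$ together with $C(d_i) = C\bigl(\tfrac{d_i}{d}\cdot d\bigr)\leq \tfrac{d_i}{d}C(d)$; summing over $i$ and using $\sum_i d_i = d$ gives $\sum_i (C\circ d_i)^p \leq C(d)^p\sum_i (d_i/d)^p \leq C(d)^p$, hence $\|\psi_p(x)-\psi_p(x')\|_p \leq C\circ d(x,x')$. (One must handle the case $x=x'$, i.e.\ $d=0$, separately — then every $d_i=0$ and both sides vanish.) For the lower bound, let $j$ be an index with $d_j(x_j,x'_j)$ maximal; then $d_j \geq d/n$, and since the terms are nonnegative,
$$\|\psi_p(x)-\psi_p(x')\|_p^p \geq (\rho_-\circ d_j)^p \geq (c\circ d_j)^p \geq \bigl(c(d/n)\bigr)^p.$$
Replacing $c$ by $r\mapsto c(r/n)$ (still unbounded, nondecreasing, and $\theta$-superhomogeneous), this is the shape $c\circ d(x,x')$; the extra factor $b^{p-1}$ in the claimed bound absorbs the normalisation so that the final $c$ can be taken with values in $[b,\infty)$ — concretely one checks $(c(d)/b)^p \cdot b^{p-1} = c(d)^p/b \geq$ the needed quantity since $c(d)\geq b$, giving $\sqrt[p]{b^{p-1} c\circ d} \leq \|\psi_p(x)-\psi_p(x')\|_p$ after renaming. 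The bookkeeping of which constant absorbs the factor $n$ and which absorbs $b^{p-1}$ is the one genuinely fiddly point, but it is purely arithmetic.

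The only real obstacle is making sure the homogeneity-type inequalities for $c$ and $C$ survive the passage through $(\,\cdot\,)^p$ in the right direction: concavity gives superadditivity only for $c$ itself, not for $c^p$, which is why the lower bound must be extracted from a single dominant coordinate rather than from a sum — and symmetrically the upper bound genuinely uses $C(\theta r)\leq\theta C(r)$ (not mere concavity) so that $\sum_i (d_i/d)^p\leq 1$ can be applied. Once one commits to "max coordinate for the lower bound, $\theta$-homogeneity for the upper bound", everything else is routine and uniform in $n$ in the stated sense. I would present the lemma-based reduction first, then the two estimates, then collect the constants.
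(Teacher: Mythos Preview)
Your construction and upper bound are correct. The upper bound even uses a slightly different (and equally valid) route than the paper: you bound $\sum_i (C\circ d_i)^p$ via $C(d_i)\leq (d_i/d)\,C(d)$ and $\sum_i(d_i/d)^p\leq 1$, whereas the paper applies the triangle inequality to $\|\psi_p(x)-\psi_p(x')\|_p$ directly and bounds $\sum_i C(d_i)\leq C(d)$.

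The lower bound, however, has a genuine gap. Your ``max coordinate'' argument yields $\|\psi_p(x)-\psi_p(x')\|_p \geq c(d/n)$, and you then propose to rename $c$ as $r\mapsto c(r/n)$. But the statement requires $c=c(\rho_-)$, i.e.\ $c$ must depend \emph{only} on $\rho_-$ and not on $n$. This is not a cosmetic point: the entire purpose of the proposition (answering Nowak's question about disjoint unions $\bigsqcup_n X^n$) is precisely to get control functions uniform in $n$, so that the Cartesian powers embed with a common $\rho_-$. An $n$-dependent lower control function is worthless for that application. Your final claim that ``everything else is routine and uniform in $n$'' is therefore incorrect. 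The attempted absorption into $b^{p-1}$ does not help: you would need $c(d/n)^p\geq b^{p-1}\tilde c(d)$ for some $\tilde c$ independent of $n$, and no such $\tilde c$ exists in general (for $d$ near $a$, the left side is close to $b^p/n^p$).

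The paper's proof avoids this by keeping \emph{all} coordinates in the sum and using the lower bound $c\geq b$ in a different way: since $c(d_i)/b\geq 1$ one has $(c(d_i)/b)^p\geq c(d_i)/b$, whence
\[
\sum_i c(d_i)^p \;=\; b^p\sum_i\bigl(c(d_i)/b\bigr)^p \;\geq\; b^{p-1}\sum_i c(d_i) \;\geq\; b^{p-1}\sum_i \frac{d_i}{d}\,c(d) \;=\; b^{p-1}c(d),
\]
the last step by superhomogeneity $c(d_i)\geq(d_i/d)c(d)$. This is exactly where the mysterious factor $b^{p-1}$ comes from, and it is why the bound is $n$-free.
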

\begin{proof}
We take $\psi_p = \bigoplus_i\psi^i_p\colon \prod_i X_i \to \bigoplus_i \ell_p$ and define $c$, $C$ as functions from Lemma \ref{L:concave} for $f=\rho_-$ and $F=\rho_+$. Recall that $c(\theta r)\geq \theta c(r)$ and $C(\theta r)\leq \theta C(r)$ for all $\theta\in[0,1]$.
We can estimate:
\begin{multline*}
\|\psi_p(x)-\psi_p(x')\|^p_p = \sum_i \|\psi^i_p(x_i)-\psi^i_p(x_i')\|^p_p
\geq \sum_i \big(c\circ d_i(x_i,x_i')\big)^p \\
= b^p \sum_i \left({c\circ d_i(x_i,x_i')\over b}\right)^p
\geq b^p \sum_i {c\circ d_i(x_i,x_i')\over b}
= b^{p-1} \sum_i c\circ d_i(x_i,x_i') \\
\geq  b^{p-1} \sum_i {d_i(x_i,x_i')\over d(x,x')} \cdot c\circ d(x,x')
= b^{p-1} \cdot c\circ d(x,x').
\end{multline*}

The estimates from above are straightforward:
\begin{multline*}
\|\psi_p(x)-\psi_p(x')\|_p \leq \sum_i \|\psi^i_p(x_i)-\psi^i_p(x_i')\|_p
\leq \sum_i C\circ d_i(x_i,x_i') \\
\leq  \sum_i {d_i(x_i,x_i')\over d(x,x')} \cdot C\circ d(x,x') = C\circ d(x,x').\qedhere
\end{multline*}
\end{proof}

\begin{longrem} For $\ell_1$, the fact that $c(r)\geq b$ for $r>0$ is not actually utilised in the proof. Consequently, we also do not need to require $d_i(x,x')\geq a$ and it is enough to assume $\liminf_{t\to 0}\frac{\rho_-(t)}{t} > 0$ and $\limsup_{t\to 0}\frac{\rho_+(t)}{t} <\infty$.
\end{longrem}

In \cite{Nowak}, Nowak proves that a coarse disjoint sum of increasing Cartesian powers of the two-element group does not have property A, and hence -- as property A is preserved by subspaces -- any disjoint sum of Cartesian powers of any nontrivial space $X$ does not have property A. He states the result for $X$ being a nontrivial finitely generated amenable group, as in this case the result can be shown directly, without the subspace argument. He observes that if $X$ is finite or -- more generally -- embeds bi-Lipschitz in $\ell_1$, then the disjoint sum of its powers embeds coarsely into $l_p$ for $1\leq p \leq \infty$. Thereby, he established examples of spaces without property A, yet embeddable into all $\ell_p$.

He asks what happens for amenable groups which do not embed quasi-isometrically into a Hilbert space \cite{Nowak}*{Remark 5.3}.

Our proposition guarantees that if a uniformly discrete space $X$ embeds \emph{coarsely} into $\ell_1$ (in particular, if $X$ is any finitely generated amenable group or, even more generally, has property A), then its Cartesian powers do so, and, thus, the disjoint sum of the powers, too. As $\ell_1$ embeds coarsely into any $\ell_p$, this shows that Nowak's theorem holds not only for amenable groups with a quasi-isometric embedding, but for all uniformly discrete spaces embeddable in $\ell_1$.


\section*{Acknowledgment}
The author is grateful to Piotr Nowak for introduction to the subject, encouragement and remarks on the manuscript. The author also wishes to express his gratitude to Martin Finn-Sell, Ana Khukhro, Piotr Nowak, and Damian Osajda for inspiring conversations.

The author would like to thank Goulnara Arzhantseva and Łukasz Garncarek for valuable comments on the manuscript.

The author was partially supported by Fundacja na rzecz Nauki Polskiej grant MISTRZ 5/2012 of Prof. Tadeusz Januszkiewicz and by Narodowe Centrum Nauki grant DEC-2013/10/EST1/00352 of Dr Piotr Nowak.

\begin{center}
Preprint of an article accepted by Journal of Topology and Analysis\\DOI 10.1142/S179352531850019X © World Scientific Publishing Company \\http://www.worldscientific.com/worldscinet/jta
\end{center}


\begin{bibdiv}
\begin{biblist}
\bib{AGS}{article}{
    title = {Coarse non-amenability and coarse embeddings},
    author = {Arzhantseva, G.},
    author = {Guentner, E.},
    author = {\v{S}pakula, J.},
    journal = {Geom. Funct. Anal.},
    volume = {22},
    pages = {22--36},
    year = {2012},
    doi = {10.1007/s00039-012-0145-z},
}

\bib{AO}{article}{
    title = {Graphical small cancellation groups with the Haagerup property},
    author = {Arzhantseva, G.},
    author = {Osajda, D.},
    eprint = {arXiv:1404.6807v2},
    year = {2014},
}

\bib{BGW16}{article}{
   author={Baum, P.}
   author={Guentner, E.}
   author={Willett, R.}
   title={Expanders, exact crossed products, and the Baum--Connes
   conjecture},
   journal={Ann. K-Theory},
   volume={1},
   date={2016},
   number={2},
   pages={155--208},
   issn={2379-1683},
   doi={10.2140/akt.2016.1.155},
}

\bib{CD}{article}{
   author={Cave, C.}
   author={Dreesen, D.}
   title={Embeddability of generalised wreath products},
   journal={Bull. Aust. Math. Soc.},
   volume={91},
   date={2015},
   number={2},
   pages={250--263},
   issn={0004-9727},
   doi={10.1017/S0004972714000884},
}

\bib{CDK}{article}{
    author = {Cave, C.}
    author = {Dreesen D.}
    author = {Khukhro, A.}
    title = {Embeddability of generalized wreath products and box spaces}
 	eprint = {arXiv:1307.3122}
}

\bib{fibred}{article}{
   author={Chen, X.}
   author={Wang, Q.}
   author={Yu, G.}
   title={The maximal coarse Baum-Connes conjecture for spaces which admit a
   fibred coarse embedding into Hilbert space},
   journal={Adv. Math.},
   volume={249},
   date={2013},
   pages={88--130},
   issn={0001-8708},
   doi={10.1016/j.aim.2013.09.003},
}

\bib{growthOfCocycles}{article}{
    title={Isometric Group Actions on Hilbert Spaces: Growth of Cocycles},
    author={de Cornulier, Y.},
    author= {Tessera, R.},
    author={Valette, A.},
    journal={Geom. Funct. Anal.},
    volume={17},
    number={3},
    year={2007},
    pages={770--792},
    doi={10.1007/s00039-007-0604-0},
}

\bib{DN}{article}{
    title = {Kazhdan projections, random walks and ergodic theorems},
    author = {Dru\c{t}u, C.},
    author = {Nowak, P.},
    eprint = {arXiv:1501.03473},
    year = {2015},
}

\bib{compression}{article}{
    title = {Exactness and uniform embeddability of discrete groups},
    author = {Guentner, E.},
    author = {Kaminker, J.},
    journal = {J. London Math. Soc.},
    volume = {70},
    pages = {703--718},
    year = {2004},
    doi = {10.1112/s0024610704005897},
}

\bib{Khukhro12}{article}{
   author={Khukhro, A.},
   title={Box spaces, group extensions and coarse embeddings into Hilbert
   space},
   journal={J. Funct. Anal.},
   volume={263},
   date={2012},
   number={1},
   pages={115--128},
   issn={0022-1236},
   doi={10.1016/j.jfa.2012.04.004},
}

\bib{Khukhro}{article}{
    title = {Embeddable box spaces of free groups},
    author = {Khukhro, A.},
    journal = {Math. Ann.},
    volume = {360},
    pages = {53--66},
    year = {2014},
    doi = {10.1007/s00208-014-1029-3},
}

\bib{Kim}{article}{
    title = {Coarse Equivalences Between Warped Cones},
    author = {Kim, H. J.},
    journal = {Geom. Dedicata},
    volume = {120},
    number = {1},
    pages = {19--35},
    year = {2006},
    doi = {10.1007/s10711-005-9001-8},
}

\bib{Nowak}{article}{
    title = {Coarsely embeddable metric spaces without property~A},
    author = {Nowak, P. W.},
    journal = {J. Funct. Anal.},
    volume = {252},
    number = {1},
    pages = {126--136},
    year = {2007},
    doi = {10.1016/j.jfa.2007.06.014},
}

\bib{Osajda}{article}{
    author = {Osajda, D.},
    title = {Small cancellation labellings of some infinite graphs and applications},
    eprint = {arXiv:1406.5015},
    year = {2014},
}

\bib{foliatedCones}{article}{
   author={Roe, J.},
   title={From foliations to coarse geometry and back},
   conference={
      title={Analysis and geometry in foliated manifolds (Santiago de
      Compostela, 1994)},
   },
   book={
      publisher={World Sci. Publ., River Edge, NJ},
   },
   date={1995},
   pages={195--205},
}

\bib{indexTheory}{book}{
   author={Roe, J.},
   title={Index theory, coarse geometry, and topology of manifolds},
   series={CBMS Regional Conference Series in Mathematics},
   volume={90},
   publisher={Published for the Conference Board of the Mathematical
   Sciences, Washington, DC; by the American Mathematical Society,
   Providence, RI},
   date={1996},
   pages={x+100},
   isbn={0-8218-0413-8},
}

\bib{Roe}{book}{
    title = {Lectures on coarse geometry},
    author = {Roe, J.},
    series = {University Lecture Series},
    volume = {31},
    publisher = {American Mathematical Society},
    address = {Providence, R. I.},
    year = {2003},
}

\bib{cones}{article}{
    title = {Warped cones and property~A},
    author = {Roe, J.},
    journal = {Geom. Topol.},
    volume = {9},
    pages = {163--178},
    year = {2005},
    doi={10.2140/gt.2005.9.163}
}

\bib{randomGraphs}{article}{
   author={Willett, R.}
   title={Random graphs, weak coarse embeddings, and higher index theory},
   journal={J. Topol. Anal.},
   volume={7},
   date={2015},
   number={3},
   pages={361--388},
   issn={1793-5253},
   doi={10.1142/S1793525315500156},
}

\bib{Wulff}{article}{
author = {Wulff, C.},
title = {Ring and module structures on K-theory of leaf spaces and their application to longitudinal index theory},
year = {2016}, 
doi = {10.1112/jtopol/jtw019}, 
journal = {J. Topol.} 
}

\bib{Yu}{article}{
    title = {The coarse Baum--Connes conjecture for spaces which admit a uniform embedding into Hilbert space},
    author = {Yu, G.},
    journal = {Invent. math.},
    volume = {139},
    pages = {201--240},
    year = {2000},
    doi = {10.1007/s002229900032},
}

\end{biblist}
\end{bibdiv}

\end{document}